\documentclass[12pt,a4paper]{article}
\usepackage{amssymb}
\usepackage{amsmath,amsfonts,amsthm,color}
\usepackage{graphicx}
\usepackage{amsmath}
\usepackage{amsthm,amscd}
\usepackage{graphicx}
\usepackage{amsmath}
\usepackage{amstext}

\newtheorem{theorem}{Theorem}

\newtheorem{definition}[theorem]{Definition}

\title{Ergodic Optimization and Ground States: a brief Introduction}
\author{Artur O. Lopes\\
\smallskip  Dedicated to the memory of Gonzalo Contreras
 }

\begin{document}

\maketitle
 
\begin{abstract} 
Our goal in this short note is to briefly and succinctly describe some basic concepts and properties of Ergodic Optimization for readers unfamiliar with the subject. We avoid technical issues in order to provide a global overview of this topic. We will not attempt to cover all of the many contributions of various authors, who have greatly enriched the theory with invaluable results. The author has made a personal selection of the topics to be addressed, keeping in mind two main objectives: to motivate the reasons for studying the subject, and to describe schematically and pictorially its relationship with relevant concepts and properties of Statistical Mechanics, which is one of the sources of inspiration for the theory. We will not present new results or detailed proofs. Some examples will be provided. We describe some procedures that may  help in obtaining explicit solutions. We present some references that by no means aim to exhaust the bibliography on the subject,  where possible, minimizing the number of references.
 
	\end{abstract}
 
\section{Introduction}

Consider the set $\Omega =\{1,2,...,d\}^\mathbb{N}$ and the shift $\sigma$ acting on $\Omega$:  
\smallskip

$\,\,\,\,\,\,\,\,\,\,\,\,\, \,\,\,\,\,\,\,\,\,\,\,\,\,\,\,\, \,\,\,\sigma(x)=\sigma(x_1,x_2,...,x_n,...)= (x_2,x_3,...,x_n,...)$.
\medskip

The metric on $\Omega$ is:
\medskip
 
$\,\,\,
d (x,y)=\left( \frac{1}{2}\right) ^{\min \{n:
x_{n}\neq y_{n}\}},\,$ where\, $ x=(x_{1},x_{2},\ldots )\, \text{and}\,y=(y_{1},y_{2},\ldots ).$
\medskip

A Borel probability $\mu$ is $\sigma$-invariant if for any continuous function $f: \Omega \to \mathbb{R}$ we have
\begin{equation} \label{ff} \int (f \circ \sigma) d \mu = \int f d\mu.
\end{equation}

Equivalently, $\mu$ is $\sigma$-invariant if for any Borel set $E\subset \Omega$, we have that
 
\begin{equation} \label{EE}\mu(\sigma^{-1} (E))=\mu(E).
\end{equation} \
 
Denote by $\mathcal{M}_\sigma$ the set of $\sigma$-invariant probabilities over the Borel sigma-algebra.
 
When considering the iteration $\sigma^n(x)$, the $n$ does not represent time, but rather the translation in the lattice $\mathbb{N}$. One can think of $\{1,2,...,d\}$ as the set of possible spins located at each site $k$ of the lattice $\mathbb{N}$.

From the point of view of Statistical Mechanics,
$\mu\in \mathcal{M}_\sigma$ means that $\mu$ is in thermodynamic equilibrium.

A cylinder set of size $r$ is a set of the form $\overline{a_1,a_2,..,a_r} \subset \Omega$, given by
$$\overline{a_1,a_2,..,a_r} = \{(a_1,a_2,..,a_r,x_{r+1}, x_{r+2},..) \,| \text{where}\, x_j\in  \{1,2,..,d\}, j \geq (r+1)\}.$$
 
The set of all cylinders of all sizes $r\in \mathbb{N}$ generates the Borel sigma-algebra on $\Omega$. 
 
\begin{definition} \label{BKL0}
The Kolmogorov entropy of a $\sigma$-invariant probability $\mu$ is given by
$$ h(\mu)=$$
\begin{equation} \label{utum}  -\,\lim_{m \to \infty} \frac{1}{m}   \sum_{ (a_1,a_2,..,a_m) \in \{1,2,...,d\} ^m}  \mu(\overline{a_1,a_2,..,a_m})  \log (\mu(\overline{a_1,a_2,..,a_m})).
\end{equation}
\end{definition}

For general properties of entropy see \cite{Walters} or \cite{Ma}.
 
\smallskip
 
In Statistical Mechanics, a function 
$H:\Omega \to \mathbb{R}$ describes the interaction among the sequences $(x_1,x_2,...,x_n,...)$ and is usually called a Hamiltonian.

The relation between problems in the symbolic space
$\{1,2,...,d\}^{\mathbb{Z}}$ and those in $\{1,2,...,d\}^\mathbb{N}$ can be properly described via Proposition 1.2 in \cite{PP}, or via Appendix section 9 in \cite{Lo1}. Ergodic results for probabilities acting on the space $\{1,2,...,d\}^{\mathbb{Z}}$
can be deduced from the use of the Ruelle operator (a powerful tool) acting on functions defined on the space $\Omega=\{1,2,...,d\}^\mathbb{N}.$ This is not an issue here.
 
\section{Thermodynamic Formalism}
 
\smallskip
 
General references for Thermodynamic Formalism are \cite{PP}, \cite{LMMS}, \cite{Lo1}, \cite{KGLM} and \cite{Walters}.
 
Given a H\"{o}lder (sometimes merely continuous) function $$A:\Omega= \{1,2,...,d\}^\mathbb{N} \to \mathbb{R},$$ in this section we are interested in the study of the set of probabilities $\mu$ maximizing the Topological Pressure:
\begin{equation} \label{AA}P(A):= \sup_{\rho \in \mathcal{M}_\sigma} \left( \int A \,\,d \rho+h(\rho )\right),
\end{equation} \label
where $h(\rho)$ is the Kolmogorov entropy of $\rho$.

One is interested in probabilities $\mu_A$ which maximize $P(A)$, that is, those satisfying
\smallskip
 
$\,\,\,\,\,\,\,\,\,\,\,\,\,\,\,\,\, \,\,\,\,\,\,\,\,\,\,\,\,\,\,\,\,\,\, \,\,\,\,\,\,\,\,\,\,\,\,\,\, {P(A)=  h(\mu_A) + \int A \, d \mu_A.}
$
\medskip

Any such probability $\mu_A$ is called an equilibrium state (or measure) for $A$.

 The study of \eqref{AA} for the case of functions $A$ that are merely continuous (see \cite{Walters}) presents some conceptual differences from the setting where $A$ is of H\"{o}lder class (see \cite{PP}). Here we are interested in the latter case.
 
In the above setting, a function $A$ is usually called a potential. In accordance with Statistical Mechanics, $A$ should represent $-H=A$, where $H$ is the Hamiltonian. The minus sign is related to the fact that for a probability $\mu$ in equilibrium (under the action of the Hamiltonian $H$), states of high energy are less probable.

By equilibrium under the action of the Hamiltonian $H$ we mean the one associated with
\begin{equation} \label{HH9}P(-H):= \sup_{\rho \in \mathcal{M}_\sigma} \left(- \int H \,\,d \rho+h(\rho )\right).\end{equation}

Given $H$, if there exists more than one equilibrium state for \eqref{HH}, we say that the Hamiltonian $H$ exhibits a phase transition.
\medskip
 
In Statistical Mechanics, if $H:\Omega \to \mathbb{R}$ is the Hamiltonian, then $\int H d \mu$ is the mean energy of the state $\mu \in \mathcal{M}( \sigma)$.
\medskip

{\bf Second Law of Thermodynamics:} There is a tendency in nature for equilibrium to be reached in states of higher entropy. This fact can be formalized as follows: for a system under the action of Hamiltonian $H$, given a real value $E$ (representing energy), consider the set
$$\mathfrak{E}_E= \{ \mu\in \mathcal{M}_\sigma \,|\,  \int H d \mu =E\}.$$
 
Under the above conditions, the probability at equilibrium is the one maximizing $h(\mu)$ among elements $\mu \in \mathfrak{E}_E$.
 
\medskip
 
If the function $A$ is of H\"older class, the $\sigma$-invariant probability $\mu_A$ is {unique}; it has positive entropy, is ergodic, mixing, and positive on cylinder sets (see \cite{PP}). The {Ruelle Theorem} is a helpful tool for obtaining its ergodic properties (see \cite{PP}, \cite{Lo1} and \cite{Bala}). Explicit examples of equilibrium probabilities for some nontrivial potentials are presented in \cite{CDLS}.
 
\medskip

When $A=0$, we obtain the equilibrium probability $\mu_0$, which is the maximal entropy measure: the independent probability with weights $\frac{1}{d}$, and $P(0)= \log d.$
 
\medskip

A large value of the entropy of a $\sigma$-invariant probability $\mu$ corresponds to a large randomness of the samples; more uncertainty. Our dynamical entropy $h(\mu)$, more than concave, is affine (see \cite{Walters}).

In agreement with Statistical Mechanics --- in the one-dimensional lattice $\mathbb{N}$ --- it is natural to introduce a parameter $\beta>0$, where {$\beta=\frac{1}{T}$, and $T>0$ represents temperature; and to consider $P(\beta \,A)$}.
\medskip

Via the Legendre transform one can relate the maximizing solutions of the Second Law problem (described above) with the maximization of pressure as in \eqref{HH}; the so-called MaxEnt method (see Appendix 9.2 in \cite{Lo1} and \cite{Lal2}).
 
In this case, in consonance with the {Second Law of Thermodynamics}, if {$H$ is a Hamiltonian $H:\Omega \to \mathbb{R}$} (which can represent energy), the probability {$\mu_{- \beta\, H}\in \mathcal{M}_\sigma$ is the equilibrium probability for the system under the influence of $H$, at temperature $T= \frac{1}{\beta}$}. Then, in our notation, $- \beta\, H=\beta A$.

More precisely, in accordance with Statistical Mechanics, we will be interested in equilibrium probabilities $\mu_{ - \frac{1}{T}\,H }$ for
\begin{equation} \label{HH}P(- \frac{1}{T}\,H):= \sup_{\rho \in \mathcal{M}_\sigma} \left(-\frac{1}{T}\, \int H \,\,d \rho+h(\rho )\right).\end{equation}
\medskip

Consider the spin lattice $\{+,-\}^\mathbb{N}$. The spin up is associated with $+$ and spin down with $-$.
\medskip
 
A measurement could result in $(+,-,-,+,+,-,-,...)$, which is a random sample. Consider a physical system at positive temperature $T$ subject to a Hamiltonian $H:\{+,-\}^\mathbb{N}\to \mathbb{R}$. 
In this case one is interested, for instance, in the {probability $\mu_{-\frac{1}{T}\,H}$ of the cylinder set $\overline{+,-,++} \subset \{+,-\}^\mathbb{N}$}, under the equilibrium regime for $H$ and temperature $T>0$; this probability $\mu_{-\frac{1}{T}\,H}$ provides statistics of samples. If $\mu_{-\frac{1}{T}\,H}(  \overline{+,-,++} )$ is very small, this means that the arrangement $+,-,++$ is quite unlikely to be observed in the first 4 sites of the lattice $\mathbb{N}.$

\medskip
 
The probability $\delta_{+,+,+,+,...}= \delta_{+^{\infty}}$ represents a type of magnetization (all spins up) on the lattice $\mathbb{N}.$ There is no randomness in this case. The entropy of the $\sigma$-invariant probability $\delta_{+^{\infty}}$ is zero.

\smallskip
 
$T=0$ means absolute zero (around $-273$ degrees Celsius). In real-world experiments, the temperature of the system under consideration can be lowered to approach zero temperature.

It is known from experiments in Physics that when {decreasing the temperature $T \to 0$} of a metal, the randomness of the corresponding equilibrium state $\mu_{-\frac{1}{T}\,H}$ also decreases. In most cases, when $T\to 0$,  there is a tendency toward decreasing entropy and the metal tends to exhibit magnetic properties. How can this be described in mathematical terms?
 
\smallskip

For low temperatures, the spins of atoms at each site become aligned with each other, and this can be properly described as {something like $\delta_{+,+,+,+,...}= \delta_{+^{\infty}}$} (a {\it ``non-random''} probability), or alternatively $\delta_{-,-,-,-,...}= \delta_{-^{\infty}}$.
 
\smallskip

Another possibility of magnetization could be $\frac{1}{2} (  \delta_{+,+,+,+,...} + \delta_{-,-,-,-,...});$ or, more generally, a  $\sigma$-invariant probability supported on a periodic orbit (for the shift $\sigma$).

\medskip
 
 \section{Ergodic Optimization}

General references on the topic are \cite{BLL}, \cite{Ga} and \cite{Lo1}.
\smallskip

A continuous function $A:\Omega\to \mathbb{R}$ can be seen as a {cost, or profit}, and {$\int A d \mu$}, for $\mu\in \mathcal{M}_\sigma$, represents the (dynamical) {$\mu$-mean value of the cost}. One could consider minimization or maximization of $\int A d \mu$ as well; simply replace $A$ by $-A$. Given $A:\Omega \to \mathbb{R}$, maximizing $\mu \to \int A d \mu$ is the main problem in Ergodic Optimization.

  An extensive bibliography on Ergodic Optimization appears in \cite{J2}.
 The study of the case of potentials $A$ that are merely continuous (see \cite{J1}, \cite{J6} and \cite{BZ}) presents some conceptual differences from the setting where $A$ is of H\"{o}lder class (see \cite{BLL}, \cite{Ga} and \cite{CLT}). Here we are interested in the latter case.

\smallskip

 
 Given a continuous function $A:\Omega\to \mathbb{R}$, consider the problem:
$$ \alpha(A):=\sup_{\rho \in\cal{M}_{\sigma}}  \{\int A(x) d \rho(x)\}.
$$
 {$\alpha(A)$ is called the maximizing value of $A$}.

\medskip
In Ergodic Optimization one is interested in probabilities {$\mu^A$ which maximize $\alpha(A)$}, that is, those satisfying
\smallskip
 
$\,\,\,\,\,\,\,\,\,\,\,\,\,\,\,\,\, \,\,\,\,\,\,\,\,\,\,\,\,\,\, \,\,\,\, \,\,\,\,\,\,\,\,\,\,\,\,\,\,\alpha(A)=   \int A \, d  {\mu^A}$
\smallskip

 Any such probability $\mu^A$ is called a maximizing probability for $A$. 
If the potential $A$ is of H\"older class, the maximizing probability $\mu^A$ need not be unique (see Section 6 in \cite{Lo1}).
 
Several explicit examples illustrating the theory are presented in \cite{FeLoEl1}.
 
\smallskip
 
If $\mu^A$ maximizes the function $A$, it also maximizes $A$ plus any constant.

\smallskip

A natural question is: given a H\"older potential $A$, when is $\mu^A$ unique? Using the H\"older norm, uniqueness holds for a dense set of H\"older potentials --- adapting results from convex analysis (see \cite{CLT} for a proof).


For a fixed H\"older potential $A$, consider $\beta \to \infty$, the Pressure $P(\beta A)$ (which is convex and analytic in $\beta$, as proved in \cite{PP}), and the associated family of equilibrium probabilities $\mu_{\beta A}$, $\beta>0$. One can show that any weak limit $\mu$ of a subsequence $\lim_{\beta_n\to \infty} \mu_{\beta_n A}=\mu$ will be a maximizing probability for $A$ (see \cite{CLT}, \cite{BLL} or \cite{Lo1}). ; more results on the topic in Section \ref{Gr}.
\smallskip

   $\beta_n \to \infty$ means $T_n= \frac{1}{\beta_n}\to 0.$
Ergodic Optimization is the natural setting in which to analyze {equilibrium probabilities at zero temperature}.
\medskip

Given a $k$-periodic orbit $\{x, \sigma(x),\sigma^2(x),...,\sigma^{k-1}(x)\}$, where $\sigma^k(x)=x$, we say that $ \,\,\,\,\,\,\rho=\sum_{j=0}^{k-1}\frac{1}{k} \delta_{\sigma^j(x)}$
is the {$\sigma$-invariant probability associated with such an orbit}; it has zero entropy (see \cite{Walters}).

The set of $\sigma$-invariant probabilities associated with periodic orbits (of all periods) is {weakly dense} in the set of $\sigma$-invariant probabilities (see \cite{Sig}).
\medskip

  Given a continuous potential $A:\Omega \to \mathbb{R}$ and $\epsilon >0$, there exists a probability $\rho$ supported on a {periodic orbit} such that 
$\int A d \rho > \alpha(A)- \epsilon.$
 
\medskip

\textbf{Fourth Law of Thermodynamics:} (Nernst law) The {entropy vanishes} when there is
only one equilibrium state at absolute zero temperature.
\medskip
 
In the same spirit as this postulate, we emphasize the fact that the entropy of an invariant probability supported on a unique periodic orbit is zero.
\medskip

Another source of inspiration for Ergodic Optimization is the Aubry--Mather Theory (see \cite{Fathi} and \cite{CI}).
 
 \begin{theorem} \label{T1}   The analogue of the Ma\~n\'e Conjecture in Aubry--Mather Theory for the ergodic setting: the property that the maximizing measure $\mu^A$ is supported on a unique periodic orbit is dense in the set of H\"older potentials $A$.
 \end{theorem}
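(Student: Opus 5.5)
This is the Contreras theorem (typical periodic optimization, Contreras 2016), and I would follow its strategy. The density of uniqueness recalled above (via \cite{CLT}) lets us restrict to potentials $A$ with a unique maximizing probability $\mu^A$. Density of the periodic-orbit property then reduces to the following: given such an $A$ and $\epsilon>0$, find a H\"older $\tilde A$ with $\|A-\tilde A\|_{\alpha}<\epsilon$ whose maximizing probability is supported on a single periodic orbit.

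\textbf{Step 1: sub-action.} For a H\"older $A$, obtain a H\"older calibrated sub-action $V$, i.e.\ $A + V - V\circ\sigma \le \alpha(A)$. This is built, for instance, as a limit of $\frac{1}{\beta}\log$ of the Ruelle eigenfunctions of $\beta A$ as $\beta\to\infty$, using the zero-temperature limit mentioned above. Replacing $A$ by the cohomologous normalized potential $R = \alpha(A) - (A+V-V\circ\sigma)\ge 0$ turns the problem into minimizing $\int R\,d\mu$ with $\min = 0$. The maximizing measures for $A$ are exactly the invariant measures supported in the compact invariant set $\{R=0\}$, which contains the Mather set $\mathrm{supp}\,\mu^A$.

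\textbf{Step 2: Ma\~n\'e's trick.} If there is a periodic orbit $\mathcal{O}$ (of period $k$) with
$$\int R\, d\mu_{\mathcal{O}} \;\le\; \eta\,\min_{y\in \mathcal O,\ w\notin \mathcal O,\ \ldots}(\text{separation of }\mathcal O)^{\alpha},$$
put $\tilde R = R + \epsilon\, d(\cdot,\mathcal O)^{\alpha}$ minus its values on $\mathcal O$. More precisely, take $\tilde A = A - \epsilon\, d(\cdot,\mathcal{O})^\alpha$ and correct on $\mathcal O$ so that $R$ vanishes there. The small H\"older perturbation making $R|_{\mathcal O}=0$ costs roughly $\int R\, d\mu_{\mathcal O}$ divided by the gap. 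Any invariant measure other than $\mu_{\mathcal O}$ then pays a strictly positive penalty. Hence $\mu_{\mathcal O}$ is the unique maximizing measure for $\tilde A$, with $\|\tilde A - A\|_\alpha$ small.

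\textbf{Step 3, the main obstacle: producing such an orbit.} We need periodic orbits $\mathcal O_n$ with $\int R\, d\mu_{\mathcal O_n}$ much smaller than $\gamma(\mathcal O_n)^{\alpha}$, where $\gamma$ is the minimal gap between distinct points of the orbit. I would use the Bressaud--Quas lemma: for the shift, any invariant measure supported on a set $K$ gives, for every $n$, periodic orbits of period $\le n$ that stay within distance $o(n^{-k})$ of $K$ for every $k$, i.e.\ superpolynomially close. Applying it to the Mather set gives orbits $\mathcal O_n$ with $R \le C\, d(\cdot, \{R=0\})^\alpha$ small along $\mathcal O_n$. Meanwhile the gap of an orbit of period $\le n$ in a $d$-symbol shift is controlled below by about $2^{-n}$. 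The remaining work, which I expect to be the delicate part, is to show that the Bressaud--Quas closeness beats this gap for some choice of $n$. Contreras handles it by combining it with a pigeonhole selection of a ``good'' period and a sub-exponential growth argument; for the Mather set of a H\"older $A$ one may first perturb $A$ so that the Mather set has zero entropy, which yields the needed sub-exponential counting. Once the inequality holds, Step 2 concludes, and the property that the maximizing measure lies on a unique periodic orbit is dense.
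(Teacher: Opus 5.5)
\textbf{There is a genuine gap in Step 3.} The paper gives no proof of this statement; it only attributes it to Contreras. Your outline has the architecture of his argument: normalize by a calibrated sub-action, penalize by $\epsilon\, d(\cdot,\mathcal O)^\alpha$ in the spirit of Ma\~n\'e and Yuan--Hunt, and use Morris's zero-entropy genericity together with Bressaud--Quas. But Step 3, which is the whole content of the theorem, does not close the way you propose.

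\emph{The comparison you propose cannot work.} You compare the Bressaud--Quas closeness with the trivial bound $\gamma(\mathcal O)\ge 2^{-p}$ for an orbit of period $p\le n$. Bressaud--Quas only guarantees a closeness $\epsilon_n=o(n^{-k})$ for each $k$, which is never $o(2^{-n})$. So this comparison cannot be won by taking $n$ large, and the ``pigeonhole selection of a good period'' does not supply an argument.

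\emph{The missing idea is orbit shortening.}
\begin{itemize}
\item Suppose $\gamma(\mathcal O)=d(\sigma^i x,\sigma^j x)$ is small. Cut the periodic word at positions $i$ and $j$ into two shorter periodic words.
\item In the shift, a point of a new orbit lying $s\ge 1$ steps before the cut is within $2^{-s}\gamma(\mathcal O)$ of the corresponding point of $\mathcal O$.
\item Set $S(\mathcal O):=\sum_{y\in\mathcal O}d(y,\mathrm{supp}\,\mu^A)^\alpha$. Then each piece satisfies $S(\mathcal O')\le S(\mathcal O)+C_\alpha\gamma(\mathcal O)^\alpha$. Keep the piece of period at most $p/2$.
\item Suppose $S(\mathcal O)\ge a\,\gamma(\mathcal O)^\alpha$ held for every periodic orbit. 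Start from a Bressaud--Quas orbit of period $\le n$ and cut at most $\log_2 n$ times. You reach a fixed point $q_n$ with $d(q_n,\mathrm{supp}\,\mu^A)^\alpha\le (1+C_\alpha/a)^{\log_2 n}\, n\,\epsilon_n^\alpha=n^{O(1)}\epsilon_n^\alpha\to 0$.
\item Hence some fixed point $q$ lies in $\mathrm{supp}\,\mu^A\subset\{R=0\}$. Then $\delta_q$ is maximizing, and by uniqueness $\mu^A=\delta_q$ is already periodic.
\end{itemize}
The loss is only polynomial in $n$, which is exactly what the superpolynomial rate absorbs. This produces orbits with $S(\mathcal O)/\gamma(\mathcal O)^\alpha$ arbitrarily small, which is the input the perturbation step needs.

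\textbf{Three further corrections.}
\begin{itemize}
\item \emph{Zero entropy comes first.} Bressaud--Quas is a zero-entropy statement, not one about arbitrary invariant measures. So Morris's theorem (uniqueness with zero entropy for generic H\"older $A$) must be imposed before Step 3. It is the hypothesis of that lemma, not a device for counting gaps.
\item \emph{The Step 2 condition must use the total, not the average.} It must bound $S(\mathcal O)$, or $\sum_{y\in\mathcal O}R(y)$, rather than $\int R\,d\mu_{\mathcal O}$. A competing orbit can shadow $\mathcal O$ through the part of each period where $R$ is small and leave once per period. It then pays the penalty $\epsilon(\rho\gamma)^\alpha$ only once, so the penalty must dominate a whole period's worth of $R$.
\item \emph{The penalty does not beat $R$ pointwise, and the correction does not help.} Since $\epsilon$ is much smaller than the H\"older constant of $R$, the penalty does not dominate $R$ pointwise near $\mathcal O$. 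The proof that $\mu_{\mathcal O}$ is the unique maximizer goes through a shadowing estimate along excursions, using backward contraction and $R\ge 0$. Your ``correction making $R|_{\mathcal O}=0$'' is not needed. It is also not as cheap as claimed: it costs about $\max_{\mathcal O}R/\gamma(\mathcal O)^\alpha$ in H\"older norm, not the average divided by the gap.
\end{itemize}
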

 This quite important result was proved by Gonzalo Contreras in 2016 in \cite{Co} (partial results in \cite{CLT});  it is somehow in consonance with Nernst law.

Related work appears  in \cite{BZ}, \cite{J6}, \cite{Morris2} and \cite{QS}.
 
\medskip

A continuous function $u:\Omega\to\mathbb{R}$ is called a subaction for $A$ if
 \begin{equation} \label{lyr}  u(\sigma(x))\geq   u(x)  +A(x) -\alpha(A) ,\,\,\;\; \text{ for all } \; x \in  \Omega.
\end{equation}

A continuous function $u:\Omega\to\mathbb{R}$ is called a {calibrated subaction} for $A$ if
\begin{equation} \label{lyr1}  u(y)=\max_{\sigma(x)=y} [u(x)+ A(x)-\alpha(A)],\,\,\;\; \text{ for all } \; y \in  \Omega.
\end{equation}

The above equation corresponds to the discrete-time version of the Lax--Oleinik semigroup equation used to investigate the Hamilton--Jacobi equation in Aubry--Mather theory (see \cite{GL1}).
 
\medskip

\begin{theorem} \label{T2} (see \cite{CLT}) \label{existsubac} Given a H\"older function $A:\Omega \to \mathbb{R}$, there exists a H\"older function $u$ which is a {calibrated subaction} for $A$.
\end{theorem}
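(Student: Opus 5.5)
The natural route is the zero-temperature limit of the Ruelle operator, as in \cite{CLT}. For $\beta>0$ let $\mathcal{L}_{\beta A}$ be the Ruelle operator,
$$\mathcal{L}_{\beta A}(\varphi)(y)=\sum_{\sigma(x)=y} e^{\beta A(x)}\varphi(x).$$
By the Ruelle Theorem (\cite{PP}) it has a strictly positive H\"older eigenfunction $h_\beta$ with eigenvalue $\lambda_\beta=e^{P(\beta A)}$. Set $u_\beta=\frac{1}{\beta}\log h_\beta$, normalized for instance by $\max u_\beta=0$. Taking $\frac1\beta\log$ of the eigen-equation gives
$$u_\beta(y)=\frac{1}{\beta}\log\sum_{\sigma(x)=y} e^{\beta\,(u_\beta(x)+A(x))}-\frac{1}{\beta}P(\beta A),$$
which is a ``soft-max'' version of \eqref{lyr1}. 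The plan has three steps: obtain uniform H\"older bounds on $u_\beta$, extract a convergent subsequence by Arzel\`a--Ascoli, and pass to the limit using two facts: $\frac1\beta\log\sum_{i=1}^d e^{\beta a_i}\to\max_i a_i$ uniformly (the error is at most $\frac{\log d}{\beta}$), and $\frac1\beta P(\beta A)\to\alpha(A)$.

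The main obstacle is the uniform H\"older estimate. Write $|A(x)-A(x')|\le C\,d(x,x')^\gamma$. I would show that $\log h_\beta$ has H\"older constant $O(\beta)$, that is, $|u_\beta(y)-u_\beta(y')|\le K\,d(y,y')^\gamma$ with $K$ independent of $\beta$. For this, represent $h_\beta$ as a limit of normalized iterates $\mathcal{L}^n_{\beta A}1/\lambda_\beta^n$, or use the standard cone invariance argument. Compare preimages term by term: if $y,y'$ agree on the first $m$ symbols, pair the preimages $a_1\cdots a_n y$ with $a_1\cdots a_n y'$. The Birkhoff sums $\sum_{j<n}\beta A$ along paired preimages then differ by at most $\beta C\sum_k 2^{-\gamma(m+k)}\le \beta C' d(y,y')^\gamma$. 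Hence every ratio $\mathcal{L}^n_{\beta A}1(y)/\mathcal{L}^n_{\beta A}1(y')$ is bounded by $e^{\beta C' d(y,y')^\gamma}$, and so is the limit. Dividing by $\beta$ gives $K=C'$. Since $\Omega$ is compact and $\max u_\beta=0$, the family $\{u_\beta\}$ is uniformly bounded (its oscillation is at most $K$) and equicontinuous.

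For the pressure limit, use \eqref{AA}: $\beta\alpha(A)\le P(\beta A)\le \beta\alpha(A)+\log d$, because $0\le h(\rho)\le\log d$. Therefore $\frac1\beta P(\beta A)\to\alpha(A)$. Now take $\beta_n\to\infty$ with $u_{\beta_n}\to u$ uniformly. The soft-max identity converges to
$$u(y)=\max_{\sigma(x)=y}[u(x)+A(x)]-\alpha(A).$$
The limit $u$ keeps the H\"older constant $K$, so it is a H\"older calibrated subaction. In particular \eqref{lyr} follows by taking $y=\sigma(x)$.
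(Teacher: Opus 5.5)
Your proposal is correct, and it is essentially the argument of \cite{CLT}, which the paper cites for this result without giving a proof: you get $\beta$-independent H\"older bounds for $\frac{1}{\beta}\log h_\beta$ by pairing preimages, apply Arzel\`a--Ascoli, use $\frac{1}{\beta}P(\beta A)\to\alpha(A)$ via $0\le h(\rho)\le\log d$, and let the soft-max in the eigenvalue equation converge uniformly to the max. Each step checks out, including passing the ratio bound $\mathcal{L}^n_{\beta A}1(y)/\mathcal{L}^n_{\beta A}1(y')\le e^{\beta C' d(y,y')^\gamma}$ to $h_\beta$ through the uniform convergence $\lambda_\beta^{-n}\mathcal{L}^n_{\beta A}1\to h_\beta$.
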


In order to find an explicit subaction $u$, it is necessary to first guess the value $\alpha(A)$; this is not a straightforward question in general. We will address this issue later.
 
\medskip

If we add a constant $c$ to a calibrated subaction $u$ for the potential $A:\Omega \to \mathbb{R}$, we obtain another one for $A$. 
Given $A$, we say that the calibrated subaction is unique when it is unique up to the addition of a constant. When $\mu_A$ is unique there is only one calibrated subaction.
There exist H\"older potentials $A$ for which the calibrated subaction is not unique; in this case, one can use the {Ma\~n\'e potential} to find different subactions (see \cite{GL1} and \cite{BLLx}).
 
\medskip

If
$u:\Omega \to \mathbb{R}$ is a subaction for $A:\Omega \to \mathbb{R}$, it follows that 
 
 $$ R_u(x)=R(x)\,:=\, u(\sigma(x)) -  u(x) - A(x) +\alpha(A)\geq 0.$$
 
\smallskip

Given $u$, we will be interested in the points $x\in \Omega$ such that {$R(x)=0.$}
 
\medskip

We call the contact locus of a subaction $u:\Omega \to \mathbb{R}$ the set
\medskip
 
$ { \mathbb M_A(u) }:= \{x\in \Omega \,|\, A(x) - u \circ \sigma (x)  + u (x)) =\alpha(A)\}= \{x\in \Omega \,|\,R(x)=0\} .
$

\medskip
 
Determining the set of points $x$ where $R(x)=0$ is of great importance: it helps to localize the support of maximizing measures.

\begin{theorem} \label{T3} (see \cite{CLT} and \cite{BLL}) ---
Suppose $\mu$ is maximizing for $A$ and $u$ is a subaction. Then,
\begin{equation} \label{esqr27} (u \circ \sigma) (x)-  u (x)= A- \alpha(A),
\end{equation}
for all $x$ in the support of $\mu$; that is, $R(x)=0$.

 Moreover, if a probability $\mu\in \mathcal{M}_\sigma$ is such that \eqref{esqr27} holds for all points $x$ in its support, then $\mu$ is maximizing for $A$.
\end{theorem}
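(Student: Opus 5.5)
The argument rests on integrating the nonnegative function $R(x)=u(\sigma(x))-u(x)-A(x)+\alpha(A)\geq 0$ against an invariant probability. Since $u$ is continuous on the compact space $\Omega$, it is bounded and integrable for every $\mu\in\mathcal{M}_\sigma$. By invariance \eqref{ff}, $\int u\circ\sigma\,d\mu=\int u\,d\mu$, so
\begin{equation*}
\int R\,d\mu \;=\; \alpha(A)-\int A\,d\mu \qquad \text{for every } \mu\in\mathcal{M}_\sigma .
\end{equation*}
This identity is the key step; both directions of Theorem \ref{T3} follow from it.

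For the first part, take $\mu$ maximizing for $A$, so that $\int A\,d\mu=\alpha(A)$. The identity then gives $\int R\,d\mu=0$. The function $R$ is continuous and nonnegative, because $u$ is a subaction \eqref{lyr}. Hence $R$ must vanish on the support of $\mu$. Indeed, if $R(x_0)>0$ at some $x_0\in\operatorname{supp}\mu$, continuity gives an open neighbourhood $V$ of $x_0$ with $R>R(x_0)/2$ on $V$. Since $x_0$ lies in the support, $\mu(V)>0$, and so $\int R\,d\mu\geq \mu(V)R(x_0)/2>0$, a contradiction. Therefore \eqref{esqr27} holds at every point of $\operatorname{supp}\mu$.

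For the converse, suppose $\mu\in\mathcal{M}_\sigma$ satisfies $(u\circ\sigma)(x)-u(x)=A(x)-\alpha(A)$ for all $x$ in its support. Here $u$ is the given continuous function; in context it is the subaction, though only continuity is used. Since $\mu(\operatorname{supp}\mu)=1$, integrating this relation over the support and using invariance gives $0=\int A\,d\mu-\alpha(A)$. Hence $\int A\,d\mu=\alpha(A)$, and $\mu$ is maximizing.

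There is no real obstacle here. The only points needing care are that $\alpha(A)$ is the supremum, so $\int R\,d\mu\ge 0$ is consistent, and that the support of a Borel probability on the compact metric space $\Omega$ has full measure. The latter is standard, since $\Omega$ is second countable: the complement of the support is a countable union of null open sets.
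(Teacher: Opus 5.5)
Your proof is correct and follows the paper's own argument: integrate the nonnegative function $R$ against $\mu$, use $\sigma$-invariance to eliminate $\int (u\circ\sigma - u)\,d\mu$, and conclude that $\int R\,d\mu = 0$. The paper stops at ``$R=0$ for $\mu$-a.e.\ $x$'', whereas you use continuity of $R$ to get vanishing at every point of the support, as the statement requires, and you also supply the converse, which the paper leaves to the references.
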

 
\smallskip

{\bf Claim:} The supports of maximizing probabilities are contained in $\mathbb M_A(u)$ for any subaction $u$.

\medskip
 
\begin{proof} It is known that $R(x)\geq 0$ for all $x$. Then, for $\mu$ maximizing $A$,
\medskip
 
$ \,\,\,\,\,\,\,\,\,\,\,\,\,\,\,\,\,\,\,\,\,\,\,\,\,\,\,\,\,\,\,\,\,\,\,\,\,\,\,0\leq \int R(x) d \mu(x)= $
\medskip
 
$ \,\,\int\, (u(\sigma(x)) -  u(x) - A(x) +\alpha(A)) d \mu =\int ( - A(x) +\alpha(A)) d \mu=0.$
 
Then $R(x)=0$ for $\mu$-a.e.\ point $x$.
\end{proof}
 
\medskip
 
Note, however, that even when $\mu^A$ is unique, given a calibrated subaction $u$ for $A$, the set {$\mathbb M_A(u)$ may strictly contain the support of $\mu^A$}.

If we interpret $x\to u(\sigma(x)) -  u(x)  =\mathfrak{d}(u)(x)$ as the {discrete time derivative} of the function $u$, the equality
\begin{equation} \label{esqr} \mathfrak{d}(u)(x)=u(\sigma(x)) -  u(x) = A(x) - \alpha(A),
\end{equation}
for $x$ in the support of $\mu^A$, can be rephrased as: $u$ is the {\bf primitive} of $A- \alpha(A)$ on the support of the maximizing probability $\mu^A$.

\smallskip
 
Using the $1/2$-algorithm described in \cite{FLO1} and \cite{FeLoEl1}, it is possible in some cases to guess {the explicit expression of a subaction for a given potential $A$; for instance, when $A$ is the indicator function defined on a cylinder set in $\{0,1\}^\mathbb{N}$. The expression for the implementation of the iterative procedure on  a desktop is in \cite{FLO1} and the Code in  Python is available in \cite{Fer}; the performance of the $1/2$-iteration method  is  analyzed in \cite{FLO1} .

 You do not need  to know $\alpha(A)$  to apply the $1/2$-method.  Uusing your desktop you derive explicit results which can be checked with pen and paper. 
  
The plan is to consider the associated binary expansion on the interval $[0,1]$, and from the pictures obtained from a certain iterative process to guess the solution: we will use the iterative $1/2$-method. The idea is first to identify elements  $w\in \Omega=\{0,1\}^\mathbb{N}$ with elements $x\in [0,1].$ We will elaborate on this.

\smallskip

Suppose the symbolic space is $\{0,1\}^\mathbb{N}$. One can visualize, via the binary expansion, elements $x=(x_1,x_2,x_3,..,x_n,..) \in \{0,1\}^\mathbb{N}$ as
\begin{equation} \label{soz} z= x_1 2^{-1}+ x_2 2^{-2} + x_3 2^{-3}+... +x_n 2^{-n}+...\in [0,1].
\end{equation}
 
Figure \ref{fig12i} illustrate this point of view; it helps to see geometric pictures.

We will explain how to use the procedure of guessing the subaction $u$ of a potential $A$ through examples. Take $A=I_{\overline{01}}$: the $1/2$-iteration method produces the picture described in Figure \ref{fig12ab} (it is not necessary many iterations). From this, a natural candidate is to set $u= - I_{\overline{0}} +1/2\, I_{\overline{1}}.$ We substitute this function $u$ into the subaction equation \eqref{lyr} and verify that it is indeed satisfied. By inspection we get that the value $\alpha(A)=1/2$, $R= 1/2\, I_{ \overline{00}} + 1/2 \,   I_{ \overline{11}}$  (which is is zero on the set
$\{(01)^\infty, (01)^\infty\}$); it follows from Theorem \ref{T3} that  $\frac{1}{2}\delta_{(01)^\infty} + \frac{1}{2} \delta_{(10)^\infty}$ is a maximizing probability for such $A$.     Thus, we obtain a rigorous mathematical result - it can be verified by hand - using the hint produced by the above-mentioned method obtained via computer. A similar procedure for finding $u$ for the potential $A=I_{\overline{01111}}$ is described in Figure \ref{purqt} (see also Example 37 in the Section 6.2 in \cite{Lo1}). We use the software Mathematica to generate the pictures.
 
\medskip

\begin{figure}[h!]
\centering
\includegraphics[scale=0.32,angle=0]{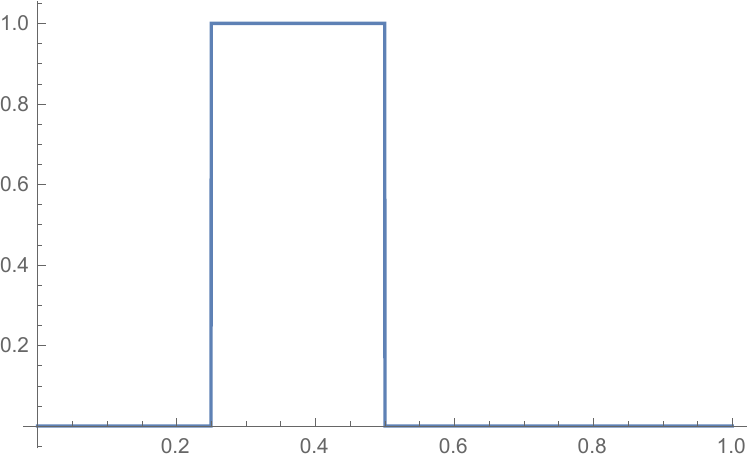}
\caption{The graph of $x\to A(x)$ ``defined''  for $x\in[0,1]$ corresponding to the potential $A=I_{\overline{01}}$.}
\label{fig12i}
\end{figure}

\begin{figure}[h!]
\centering
\includegraphics[scale=0.32,angle=0]{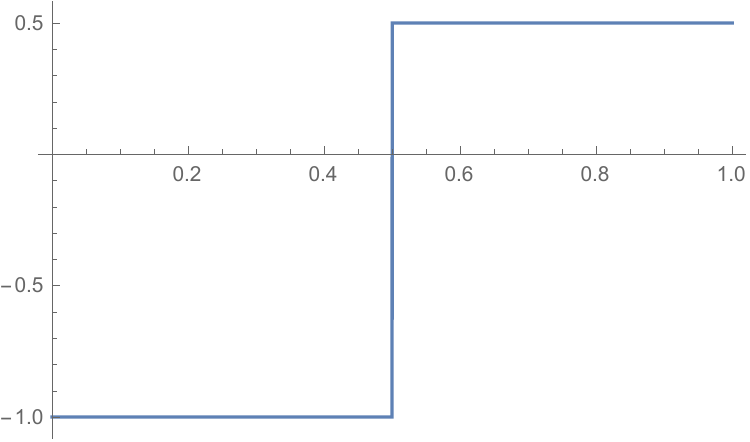}
\caption{The graph of  the subaction $x \to u(x)$ ``defined'' for $x\in[0,1]$ obtained from the $1/2$-iteration method when $A=I_{\overline{01}}$. From the computer-generated picture we guess that the subaction of $A$ is $u= - I_{\overline{0}} +1/2\, I_{\overline{1}}.$ One can verify rigorously (by hand) that such $u$ is indeed a solution to equation \eqref{lyr}. In this case  $\frac{1}{2}\delta_{(01)^\infty} + \frac{1}{2} \delta_{(10)^\infty}$ is the maximizing probability and $\alpha(A)=1/2$. Not many iterations were required to obtain this picture.}
\label{fig12ab}
\end{figure}

\begin{figure}[h!]
\centering
\includegraphics[scale=0.52,angle=0]{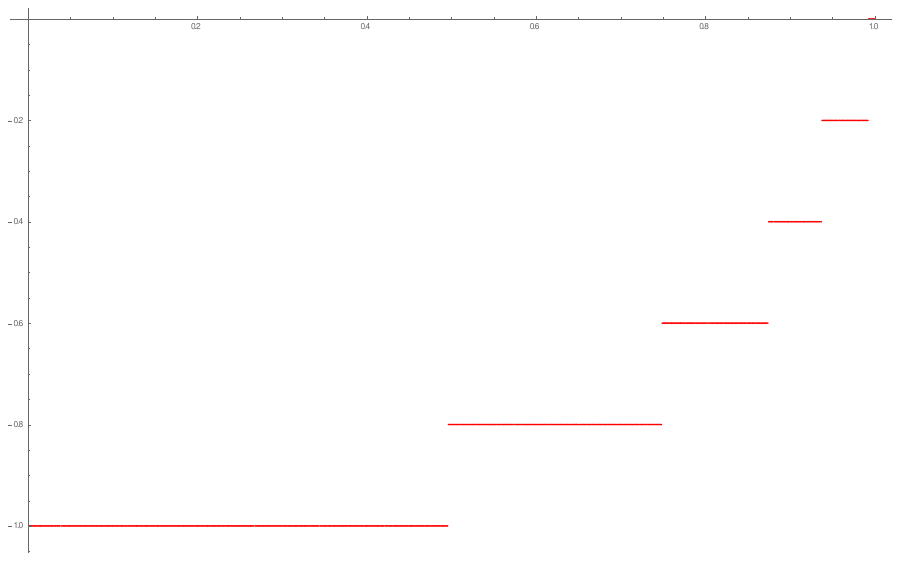}
\caption{The graph of $x \to u(x)$ on $[0,1]$ obtained from the $1/2$-iteration method when $A=I_{\overline{01111}}$. From the above picture one can guess the expression for a subaction $u$ in the case of this potential $A$. One can verify that such $u$ is a solution to the subaction equation \eqref{lyr}, where $\alpha(A)=0.2$.}
\label{purqt}
\end{figure}

\begin{figure}[h!]
\centering
\includegraphics[scale=0.4,angle=0]{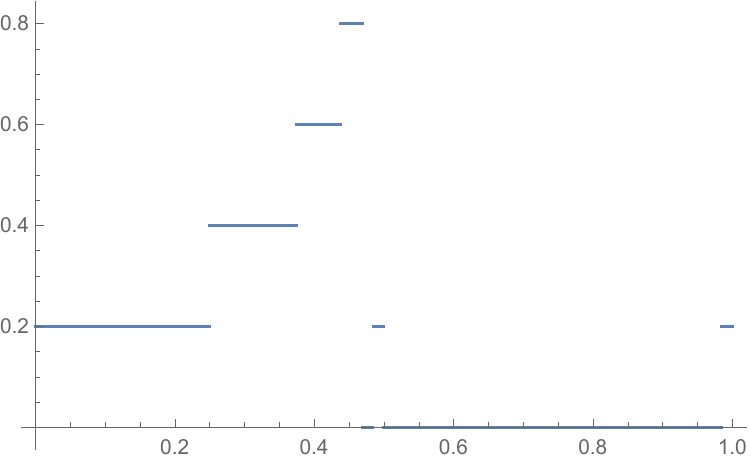}
\caption{ The graph on $[0,1]$ corresponding to the function $x \to R(x)$ for the potential $I_{\overline{01111}}$ (using  the subaction $u$ obtained from Figure \ref{purqt}). It is known that the function $R$ vanishes on the support of the maximizing probability. From the computer-generated function $R$, it is possible to see that the maximizing probability is $\mu^A=\frac{1}{5} \sum_{j=0}^{4} \delta_{\sigma^j (\, (01111)^\infty \,)}$. Note that the function $R$ can vanish outside the support of $\mu^A$. Also note that $\mu^A(\overline{1})>\mu^A(\overline{0})$.}
\label{purqt1}
\end{figure}

A point $ x \in \Omega$ is said to be {\it non-wandering with respect to $A:\Omega \to \mathbb{R}$}, if {for every $ \epsilon > 0 $, there exists an integer
$ k \ge 1 $ and a point $ y \in \Omega $} such that
$$ d(x, y) < \epsilon, \;\;\; d(x, \sigma ^k(y)) < \epsilon \; \text{ and } \;
\Big| \sum_{j = 0}^{k - 1} (A - \alpha(A)) \circ \sigma ^j (y) \Big| < \epsilon. $$

For practical purposes we can assume $\alpha (A)=0$.
In this case the above means that pseudo-orbits beginning at $x$ can have an arbitrarily small cumulative cost along the sequence.
 
\medskip

Up to resolution $\epsilon$, $\,\,\, \{y, \sigma(y),..., \sigma^{k-1}(y)\}$ {looks like a periodic orbit.}
\medskip

We denote by $ \Omega(A) $ the {\it set of non-wandering points} with respect to the observable $ A \in C^0(\Omega) $.
When the observable is H\"older, $\Omega(A)$ contains the support of all maximizing probability measures for $A$ (see \cite{CLT}, \cite{GL1} or \cite{Ga}).
 
\medskip

There is no mention of subactions in the above definition of $\Omega(A).$
 
\smallskip
 
We are interested in {finding $u$ so that $\Omega(A)=\mathbb{M}_A(u)$}.

A subaction $ u \in C^0(\Omega) $ is said to be separating (with respect to $A$) if it satisfies {$ \mathbb M_A(u) = \Omega(A) $; such $u$ provides the smallest possible $\mathbb M_A(u)$}.

\smallskip

The next two results were proved in \cite{GLT}:

\begin{theorem} \label{T4}
Given an $\alpha$-H\"older potential $ A : \Omega \to \mathbb R $,
{there exists an $\alpha$-H\"older separating subaction for $ A $}. Furthermore, in the $\alpha$-H\"older topology,
 the subset of $\alpha$-H\"older separating subactions is generic among all $\alpha$-H\"older
 subactions.
 \end{theorem}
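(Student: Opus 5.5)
The plan is to build the separating subaction as a convergent series of subactions, each eliminating contact points away from $\Omega(A)$, and then to get genericity from a Baire category argument. Normalize $\alpha(A)=0$. First, for any subaction $u$ we have $\Omega(A)\subset \mathbb M_A(u)$. Indeed, if $x\in\Omega(A)$, take $y$ close to $x$ with $\sigma^k(y)$ close to $x$ and $|\sum_{j<k}A(\sigma^j y)|<\epsilon$. Summing $R_u$ along the orbit gives
\begin{equation*}
\sum_{j=0}^{k-1}R_u(\sigma^j y)=u(\sigma^k y)-u(y)-\sum_{j=0}^{k-1}A(\sigma^j y),
\end{equation*}
which is small by continuity of $u$. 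Since $R_u\geq 0$, the term $R_u(y)$ is small, and letting $\epsilon\to 0$ gives $R_u(x)=0$. So the task is to find $u$ with $R_u>0$ off $\Omega(A)$.

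For the reverse inclusion I would use the Ma\~n\'e potential
\begin{equation*}
S(x,y)=\lim_{\epsilon\to0}\inf\Big\{-\sum_{j<k}A(\sigma^j z): d(z,x)<\epsilon,\ d(\sigma^k z,y)<\epsilon\Big\}.
\end{equation*}
Its properties, from \cite{GL1}, are the following. The function $y\mapsto S(x,y)$ is an $\alpha$-H\"older subaction, with H\"older constant controlled uniformly using the bounded distortion of Birkhoff sums of H\"older functions on the shift. Moreover $\Omega(A)=\{x: S(x,x)=0\}$. Fix $x\notin\Omega(A)$, so $S(x,x)>0$. The subaction $u_x=-S(x,\cdot)$, or an appropriate sign variant, is calibrated along backward chains ending near $x$. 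Computing $R_{u_x}$ near $x$, using $\sigma(x)$ and the definition of $S$, should give $R_{u_x}(x)>0$, and by continuity $R_{u_x}>0$ on a neighborhood $V_x$.

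Next, cover the compact set $\{x:\ d(x,\Omega(A))\geq 1/n\}$ by finitely many $V_x$. Since the whole complement of $\Omega(A)$ is $\sigma$-compact, this yields countably many subactions $u_i$ with open sets $V_i$ covering $\Omega\setminus\Omega(A)$ and $R_{u_i}>0$ on $V_i$. Define $u=\sum_i c_i u_i$ with $c_i>0$, $\sum c_i=1$, and $c_i$ decaying fast enough that the series converges in the $\alpha$-H\"older norm; normalize each $u_i$ by an additive constant so its sup norm is bounded by its H\"older seminorm. Because the subaction inequality is convex, $u$ is a subaction and $R_u=\sum c_iR_{u_i}$. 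Each term is nonnegative, so $R_u>0$ on every $V_i$. Hence $\mathbb M_A(u)=\Omega(A)$.

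For genericity, the set of $\alpha$-H\"older subactions is closed and convex in the H\"older topology, hence a Baire space. For each $n$ define
\begin{equation*}
\mathcal U_n=\{v:\ R_v>0 \text{ on } \{d(\cdot,\Omega(A))\geq 1/n\}\}.
\end{equation*}
This set is open because positivity of a continuous function on a compact set is open under uniform perturbation. It is dense because for any subaction $v$ and $t\in(0,1]$, the combination $(1-t)v+tu$ is again a subaction, lies in $\mathcal U_n$, and tends to $v$ as $t\to 0$. The separating subactions are exactly $\bigcap_n\mathcal U_n$, which is therefore residual.

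I expect the main obstacle to be showing $R_{u_x}(x)>0$ for $x\notin\Omega(A)$, that is, getting strict inequality from the Ma\~n\'e potential, together with keeping uniform H\"older bounds so that the series converges in the $\alpha$-H\"older norm. My first attempt would compare $S(x,\sigma x)$ with $-A(x)+S(x,x)$ through the triangle inequality $S(x,z)\le S(x,y)+S(y,z)$, exploiting that $S(x,x)>0$.
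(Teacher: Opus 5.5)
The paper gives no proof of its own and only refers to \cite{GLT}; your argument is correct and is essentially the one used in that reference: the Ma\~n\'e potential $S(x,\cdot)$ gives a subaction whose contact locus misses each $x\notin\Omega(A)$, a countable convex combination of these is separating, and genericity follows from the Baire argument with the open dense sets $\mathcal{U}_n$. The step you flag as the main obstacle is immediate, since the one-step chain $z=x$, $k=1$ is admissible for every $\epsilon$, so $S(x,\sigma(x))\le -A(x)$ and hence $R_{u_x}(x)=S(x,x)-A(x)-S(x,\sigma(x))\ge S(x,x)>0$. Uniform H\"older bounds are not needed, because weights $c_i$ proportional to $2^{-i}/(1+\|u_i\|_{\alpha})$ already give convergence in the $\alpha$-H\"older norm. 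You should state $k\ge 1$ in the definition of $S$, since otherwise the empty chain gives $S(x,x)\le 0$ for every $x$.
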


\begin{theorem} \label{T5} The subset of Lipschitz continuous potentials $A$ such that for some subaction $u$, the set {$ \mathbb{M}_A(u)=\Omega(A)$} coincides with the {support of the unique (ergodic) maximizing probability} (with support in a periodic orbit) is {dense} in the Lipschitz topology.
\end{theorem}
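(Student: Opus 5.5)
The plan is to combine Theorem \ref{T1} (Contreras' density of potentials with a maximizing measure on a unique periodic orbit, in its Lipschitz version) with a small perturbation that forces the contact locus to shrink onto that orbit. Fix a Lipschitz potential $A$ and $\epsilon>0$. First, by Theorem \ref{T1}, choose a Lipschitz $A_1$ with $\|A-A_1\|_{Lip}<\epsilon/2$ whose maximizing probability $\mu^{A_1}$ is unique and supported on a periodic orbit $\mathcal{O}=\{p,\sigma(p),\dots,\sigma^{k-1}(p)\}$. Let $u_1$ be a Lipschitz calibrated subaction for $A_1$; it exists by Theorem \ref{T2}, whose proof in \cite{CLT} works in the Lipschitz class. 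Then $R_1=u_1\circ\sigma-u_1-A_1+\alpha(A_1)\ge 0$ vanishes on $\mathcal{O}$ by Theorem \ref{T3}, but it may also vanish elsewhere.

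Second, perturb with a penalization: set $A_2=A_1-\delta\, d(x,\mathcal{O})$ with $\delta>0$ small, so that $\|A_2-A_1\|_{Lip}\le C\delta<\epsilon/2$. Since $d(\cdot,\mathcal{O})=0$ on $\mathcal{O}$ and $A_2\le A_1$, we get $\alpha(A_2)=\alpha(A_1)$, and $\mu^{A_1}$ remains the unique maximizing probability: any other invariant measure $\nu$ satisfies $\int A_2\,d\nu\le\int A_1\,d\nu\le \alpha(A_1)$, with equality only if $\nu=\mu^{A_1}$. The same $u_1$ is still a subaction for $A_2$, now with $R_2=R_1+\delta\, d(\cdot,\mathcal{O})$. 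Hence $\mathbb M_{A_2}(u_1)=\{R_1=0\}\cap\mathcal{O}=\mathcal{O}$, which is exactly the support of the unique maximizing measure.

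Third, show that $\Omega(A_2)=\mathcal{O}$. By \cite{CLT} quoted above, $\Omega(A_2)$ contains the support of $\mu^{A_2}$, hence contains $\mathcal{O}$. For the reverse inclusion, take $x\notin\mathcal{O}$ and suppose there are $y$ and $k$ with $d(x,y)<\eta$, $d(x,\sigma^k y)<\eta$ and $|\sum_{j<k}(A_2-\alpha)(\sigma^j y)|<\eta$. Summing $R_2$ along the segment gives
$$\sum_{j<k}R_2(\sigma^j y)=u_1(\sigma^k y)-u_1(y)-\sum_{j<k}(A_2-\alpha)(\sigma^jy),$$
which is at most $2\mathrm{Lip}(u_1)\eta+\eta$. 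On the other hand, the term $j=0$ alone is at least $\delta\, d(y,\mathcal{O})\ge \delta(d(x,\mathcal{O})-\eta)$, which stays bounded away from $0$ as $\eta\to0$. This is a contradiction, so $x\notin\Omega(A_2)$. Therefore $\Omega(A_2)=\mathcal{O}=\mathbb M_{A_2}(u_1)$, and $A_2$ is $\epsilon$-close to $A$ in the Lipschitz topology, which gives density.

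The main obstacle is the first step: it needs the deep Theorem \ref{T1} of Contreras, and specifically its Lipschitz-topology version, which I will assume as given. The other delicate point is that the constant $\mathrm{Lip}(u_1)$ must not change when passing from $A_1$ to $A_2$. This is why I keep the same $u_1$ and use only the subaction inequality, rather than recomputing a calibrated subaction for $A_2$.
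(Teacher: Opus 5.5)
The paper does not prove this statement; it only attributes it to \cite{GLT}, so there is no in-text argument to compare with, and your proof has to stand on its own. It does: it is correct, given Theorems \ref{T1}--\ref{T3}.

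The penalization $A_2=A_1-\delta\, d(\cdot,\mathcal O)$ leaves $\alpha$ unchanged and keeps $u_1$ a subaction with $R_2=R_1+\delta\, d(\cdot,\mathcal O)$. This shrinks the contact locus to exactly $\mathcal O$, and your telescoping estimate gives the reverse inclusion for $\Omega(A_2)$.

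Two simplifications are worth noting.

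First, your Step 3 is just the general inclusion $\Omega(A)\subset\mathbb M_A(u)$, which holds for every continuous subaction $u$. Uniform continuity gives $|u(\sigma^k y)-u(y)|\to 0$ as $y,\sigma^k y\to x$, and continuity of $R$ gives $R(y)\to R(x)$. So neither the Lipschitz regularity of $u_1$ nor the ``delicate point'' about $\mathrm{Lip}(u_1)$ plays any role: once $\delta$ is fixed, $A_2$ and $u_1$ are fixed.

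Second, you do not need uniqueness from Step 1. Suppose $A_1$ merely has some maximizing measure carried by a periodic orbit $\mathcal O$. Then any maximizer $\nu$ for $A_2$ satisfies $\alpha(A_1)=\int A_2\,d\nu\le \alpha(A_1)-\delta\int d(\cdot,\mathcal O)\,d\nu$. Hence $\nu(\mathcal O)=1$, so $\nu$ is the periodic measure, and the penalization produces uniqueness by itself.

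The trade-off of your route is that all the depth sits in Contreras' theorem in the Lipschitz topology. That is precisely the form proved in \cite{Co}, so assuming it is legitimate. In exchange, passing from ``some maximizing measure on a periodic orbit'' to ``$\mathbb M_A(u)=\Omega(A)=\mathrm{supp}\,\mu^A$'' becomes an explicit perturbation of Lipschitz size $O(\delta)$ that works with any subaction of $A_1$. This is the standard device of penalizing the distance to the support of the maximizing measure.
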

 \medskip

 The following result is a consequence of the continuously varying support property (see \cite{CLT}).
 
\medskip

 \begin{theorem} \label{T8} In the case where the unique maximizing measure for $A$ is supported on a single periodic orbit,
there exists {a neighborhood $U$ of $A$} in the Lipschitz topology such that, for all $C$ in $U$, the maximizing probability for $C$ coincides with the maximizing probability for $A$. 
\end{theorem}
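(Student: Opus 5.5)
We need to prove Theorem \ref{T8}: if $\mu^A$, the unique maximizing measure for the Lipschitz potential $A$, is supported on a periodic orbit $\mathcal{O}=\{x,\sigma(x),\dots,\sigma^{k-1}(x)\}$, then every Lipschitz $C$ close enough to $A$ has the same unique maximizing measure $\mu^A$.

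\textbf{Step 1: a strict margin for $A$.} Use Theorem \ref{T5}'s situation, or more precisely a separating subaction from Theorem \ref{T4}. Assume (refining if necessary, using uniqueness of $\mu^A$ and that $\Omega(A)$ carries only maximizing measures) that there is a Lipschitz subaction $u$ with $\mathbb M_A(u)=\mathcal{O}$. Normalize $\alpha(A)=0$ and set $R=u\circ\sigma-u-A\ge 0$, so $R$ vanishes exactly on $\mathcal{O}$.

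The key quantitative fact to establish is a linear lower bound near the orbit:
\[
R(y)\ \ge\ c\, d(y,\mathcal{O})
\]
for some $c>0$ and all $y$. This does not hold for a general subaction. The plan is to modify $u$ to $u+\eta\, d(\cdot,\mathcal O)$-type corrections (the standard trick: add $\epsilon\,\phi$ with $\phi(y)=d(y,\mathcal{O})$ and use that $\sigma$ expands distances by the factor $2$ near the periodic orbit, so $\phi\circ\sigma-\phi\ge \phi$ locally). Taking $\tilde u=u-\epsilon\phi$ gives $\tilde R=R+\epsilon(\phi-\phi\circ\sigma)$. This must be handled with the opposite sign along the orbit direction. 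Far from $\mathcal{O}$, $R$ is bounded below by a positive constant by compactness. I expect this step, obtaining the linear margin, to be the main obstacle. I would instead do it at the level of orbit segments: prove that for any $y$, Birkhoff sums $\sum_{j<n}R(\sigma^j y)\ge c\sum_{j<n} d(\sigma^j y,\mathcal O)$ up to a bounded error. This uses the shadowing/expansivity of the shift near a periodic orbit: points staying $\delta$-close to $\mathcal{O}$ for $n$ steps are $2^{-n}$-close to it.

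\textbf{Step 2: perturbation.} Let $C=A+H$ with $\mathrm{Lip}(H)$ and $\|H\|_\infty$ small. For any ergodic $\nu\ne\mu^A$ compare $\int C\,d\nu-\int C\,d\mu^A$. Normalize $H$ by subtracting its value on the orbit using a coboundary: choose a Lipschitz $v$ (with $\mathrm{Lip}(v)\lesssim \mathrm{Lip}(H)$) so that $\tilde H=H+v-v\circ\sigma$ vanishes on $\mathcal O$. This is possible since $\mathcal O$ is a periodic orbit: adjust to make the values constant equal to $\int H\,d\mu^A$ on $\mathcal O$, then extend Lipschitz-wise. Then
\[
\int C\,d\nu-\int C\,d\mu^A=-\int R\,d\nu+\int(\tilde H-\textstyle\int H d\mu^A)d\nu \le -c\int d(\cdot,\mathcal O)\,d\nu+K\,\mathrm{Lip}(H)\int d(\cdot,\mathcal O)\,d\nu .
\]
This is negative once $K\,\mathrm{Lip}(H)<c$, because $\nu\ne\mu^A$ forces $\int d(\cdot,\mathcal O)d\nu>0$. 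Ergodic decomposition extends the conclusion to all invariant $\nu$. Hence $\mu^A$ is the unique maximizing measure for $C$, which gives the neighborhood $U$ in the Lipschitz topology.
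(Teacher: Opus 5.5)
There is a genuine gap in Step 1, and it cannot be patched. You assume that uniqueness of $\mu^A$, together with the fact that $\Omega(A)$ carries only maximizing measures, yields a subaction $u$ with $\mathbb M_A(u)=\mathcal O$. That does not follow. A compact set carrying only the measure $\mu^A$ need not equal $\mathcal O$: $\Omega(A)$ can contain points that are wandering for $\sigma$, such as a zero-cost homoclinic loop at $\mathcal O$. Since $\Omega(A)\subset\mathbb M_A(u)$ for every subaction (this is why separating subactions give the smallest contact locus, Theorem \ref{T4}), no subaction then has contact locus $\mathcal O$.

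Moreover, under the stated hypothesis the conclusion itself is false. In $\{0,1\}^{\mathbb N}$ let $K=\{0^\infty\}\cup\{0^a10^\infty : a\ge 0\}$ and $A=-d(\cdot,K)$.
\begin{itemize}
\item $A$ is $1$-Lipschitz, and $A\le 0$ with equality exactly on $K$.
\item The only invariant measure carried by $K$ is $\delta_{0^\infty}$: inside $K$ the point $0^a10^\infty$ has the single preimage $0^{a+1}10^\infty$, so these atoms would all have equal mass. Hence $\delta_{0^\infty}$ is the unique maximizing measure, while $\Omega(A)=K$.
\item Put $C_\epsilon=A+\epsilon\, d(\cdot,0^\infty)$, so $\|C_\epsilon-A\|_{\mathrm{Lip}}\le 2\epsilon$. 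Let $p_a=(0^a1)^\infty$ with periodic measure $\mu_{p_a}$.
\item For $0\le j\le a$ one has $d(\sigma^jp_a,K)\le 2^{-(2a-j+2)}$, so $\sum_{j=0}^{a}d(\sigma^jp_a,K)<2^{-(a+1)}$. Also $d(\sigma^ap_a,0^\infty)=1/2$.
\end{itemize}
Therefore
\[
\int C_\epsilon\,d\mu_{p_a}\ \ge\ \frac{1}{a+1}\Big(\frac{\epsilon}{2}-2^{-(a+1)}\Big)\ >\ 0=\int C_\epsilon\,d\delta_{0^\infty}
\]
for $a$ large. So $\delta_{0^\infty}$ is not even maximizing for $C_\epsilon$, however small $\epsilon>0$ is. In terms of your argument: along $p_a$ the averages of $R$ are $O(2^{-a}/a)$, while those of $d(\cdot,\mathcal O)$ are at least $1/(2(a+1))$. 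Since $\int R_u\,d\nu=\alpha(A)-\int A\,d\nu$ does not depend on $u$, your linear margin fails for every choice of subaction, pointwise or in Birkhoff form.

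The hypothesis that makes the statement true is the one in Theorem \ref{T5}: $\Omega(A)=\mathcal O$, or equivalently (by Theorem \ref{T4}) $\mathbb M_A(u)=\mathcal O$ for some subaction $u$. The continuity-of-support argument the paper points to also needs it. In the example, every $C_\epsilon$-maximizing measure charges the cylinder $\overline{1}$, so the supports do not shrink to $\mathcal O$.

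Under this hypothesis your route works, and the margin is even pointwise once you fix the sign in Step 1. Let $\phi=d(\cdot,\mathcal O)$ and let $k$ be the period; then $\phi\circ\sigma=2\phi$ wherever $\phi\le 2^{-(k+2)}$. Take $\tilde u=u+\epsilon\phi$, not $u-\epsilon\phi$. Then $\tilde R=R+\epsilon(\phi\circ\sigma-\phi)\ge\epsilon\phi$ near $\mathcal O$, and $\tilde R\ge\min\{R(y):\phi(y)\ge 2^{-(k+2)}\}-\epsilon>0$ away from it. Hence $\tilde R\ge c\,\phi$ for some $c>0$ when $\epsilon$ is small. Step 2 then goes through as you wrote it, and you do not need ergodic decomposition, since $\int\phi\,d\nu=0$ already forces $\nu=\mu^A$.

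The paper's route is that supports of maximizing measures for nearby potentials stay near $\mathcal O$, and $\mathcal O$ is an isolated invariant set. Yours is instead a direct quantitative estimate, and it shows that only $\mathrm{Lip}(C-A)$ needs to be small.
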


\section{Ground States} \label{Gr}
\medskip
 
Consider a variable parameter {$\beta \to \infty$}, and the associated family of equilibrium probabilities $\mu_{\beta A}$, $\beta=\frac{1}{T}>0.$
\smallskip

Any $\nu\in \mathcal{M}_\sigma$ such that, for some sequence $\beta_n$, we have the weak convergence
$$\lim_{\beta_n \to \infty} \mu_{\beta_n A}=\nu,$$
is called a ground state for $A$. One can show that such $\nu$ is maximizing for $A$ (see \cite{BLL}, \cite{Ga}, \cite{Le}, \cite{CoLe} or \cite{Lo1}).

\smallskip
When, for a given $A$ and $\nu$, we have
  {$\lim_{\beta \to \infty} \mu_{\beta A}=\nu,$}
we say that there exists {selection of a maximizing probability at zero temperature}. The measure $\nu$ is the selected probability. If the maximizing probability $\mu^A$ is unique, then $\nu =\mu^A$; a unique ground state.
 
\medskip

Suppose there exist two distinct maximizing probabilities $\mu_1,\mu_2$; they could both be ground states for $A$ (distinct examples appear in \cite{BLLx} and \cite{LeMe}). 
 
Different subsequences $\lim_{\beta_n\to \infty} \mu_{\beta_n A}$ could possibly converge to each of the two maximizing probabilities. Indeed, this can happen, as mentioned in \cite{CGU}.

\smallskip

In an independent work, when $\Omega=\{0,1\}^\mathbb{N},$ the authors of \cite{BGT} consider a certain subclass of potentials where the two maximizing probabilities for $A$ are $\mu_1=\delta_{0^\infty}$ and $\mu_2=\delta_{1^\infty}$; two magnetic states at zero temperature. In some of the examples, $A$ is of H\"older class.

\smallskip

\begin{theorem} \label{T10} By specifying certain explicit values of $A$ on $\overline{00^n1}$, $\overline{01^n0}$, $\overline{11^n0}$ and $\overline{10^n1}$, $n \in \mathbb{N}$, one can produce examples where
 
\medskip
 
a)\,$ \lim_{\beta\to+\infty} \mu_{\beta\, A} = \delta_{1^\infty}$,
 
or, 
 
\medskip
 
b) for an explicit value $c$, one has $\lim_{\beta \to + \infty }\mu_{\beta A} = \frac{1}{1+c^2}\delta_{0^\infty} + \frac{c^2}{1+c^2} \delta_{1^\infty},$
 
or,
\medskip
 
c) There exists a sequence $\beta_n \to \infty$ such that
 
$\lim_{k\to \infty}\mu_{\beta_{2k}\, A} = \delta_{0^\infty}$  and $\lim_{k\to \infty} \mu_{\beta_{2k+1}\, A} = \delta_{1^\infty}$.
 
\end{theorem}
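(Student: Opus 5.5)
Throughout, write $\mu_\beta:=\mu_{\beta A}$. The plan is to reduce the equilibrium state of $\beta A$ to the equilibrium of a countable Markov (renewal) structure built on the two fixed points $0^\infty$ and $1^\infty$. On $\{0,1\}^\mathbb{N}$, every point other than the two fixed points is a concatenation of blocks $0^n$ and $1^m$, and $A$ is prescribed on the cylinders $\overline{00^n1}$, $\overline{01^n0}$, $\overline{11^n0}$, $\overline{10^n1}$. We choose $A$ to be $0$ on $0^\infty$ and $1^\infty$, so $\alpha(A)=0$. We choose it strictly negative (with summable tails, giving H\"older regularity) on the transition cylinders. Then $\delta_{0^\infty}$ and $\delta_{1^\infty}$ are the only maximizing probabilities. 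This can be checked with the subaction $u=0$ and Theorem \ref{T3}, after adjusting $A$ by a coboundary so that $R\ge 0$ vanishes only on the fixed points.

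Step 1 is to write the pressure through the Ruelle operator $\mathcal L_{\beta A}$, or equivalently through a renewal equation. Let $a_n(\beta)=e^{\beta S_n A}$ be the weight of an excursion of type $0^n1$, and $b_m(\beta)$ the weight of an excursion of type $1^m0$. Then $P(\beta A)$ is the unique $p>0$ satisfying
\[
\Big(\sum_n a_n(\beta)e^{-np}\Big)\Big(\sum_m b_m(\beta)e^{-mp}\Big)=1 ,
\]
and $\mu_\beta$ is the associated renewal (Kac tower) measure. In particular,
\[
\frac{\mu_\beta(\text{near }0^\infty)}{\mu_\beta(\text{near }1^\infty)}\approx\frac{\sum_n n\,a_n e^{-np}}{\sum_m m\,b_m e^{-mp}},
\]
the ratio of expected excursion lengths.

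Step 2 is the asymptotic analysis as $\beta\to\infty$. Since $p=P(\beta A)\to 0$, the excursion masses are governed by $\sum_n n\,e^{\beta S_nA-np}$. The choice of the explicit values of $A$ on the four families of cylinders tunes these two series.

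For (a), we make the cost of leaving $1^\infty$ larger (for instance $A$ on $\overline{10^n1}$ much more negative), so the length ratio tends to $0$ and $\mu_\beta\to\delta_{1^\infty}$.

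For (b), we use symmetric tails $A(\overline{0 0^n 1})\sim -\log(1+1/n)$-type values, giving polynomial decay, together with a constant shift $\log c$ on one side. The two generating functions then behave like $c^{\pm1}$ times the same divergent quantity, and the ratio converges to $1:c^2$. The square appears because both the weight and the expected length scale by $c$.

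For (c), we let the tails on the two sides oscillate on different scales, e.g. with lacunary blocks where $A$ is nearly $0$. The dominant excursion type then alternates along a sequence $\beta_n$, and we extract subsequences converging to each delta.

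The main obstacle is step 2 in case (c), together with the H\"older requirement. One needs uniform control of $p(\beta)$ in the implicit renewal equation while the tails oscillate, and one must show that the measure mass really concentrates on one fixed point rather than on a mixture. I would first try a crude bound $p(\beta)\le e^{-\beta\delta}$ and compare partial sums over dyadic ranges of $n$, reducing to a two-scale Laplace-method argument. The H\"older condition forces $|A|$ on $\overline{00^n1}$ to decay geometrically in $n$, which makes (c) harder. If it cannot be achieved there, I would settle for continuous $A$ in (c), matching the remark that only some examples are H\"older.
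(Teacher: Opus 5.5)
The paper gives no argument for this statement; it only quotes \cite{BGT}. So everything rests on your Step 2, and the heuristics there are wrong.

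\textbf{Step 1 and part (a).} Step 1 is a sound framework in the summable-variation class. Attach the value on $\overline{10^n1}$ (resp.\ $\overline{01^m0}$) to the run it opens. Then the run weights factor, and $P(\beta A)$ is the root of $F(p)G(p)=1$, where $F(p)=\sum_n a_ne^{-np}$ and $G(p)=\sum_m b_me^{-mp}$. However, your mass-ratio formula omits the normalization of the run-length laws $a_ne^{-np}/F(p)$ and $b_me^{-mp}/G(p)$. The correct ratio is
\[
\frac{\mu_\beta(\text{0-side})}{\mu_\beta(\text{1-side})}=\frac{F'(p)\,G(p)}{F(p)\,G'(p)},
\]
which is your expression times $G(p)/F(p)=G(p)^2$. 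That factor is exponentially large or small as soon as the two sides are asymmetric.

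With the factor restored, your mechanism for (a) disappears. Multiplying all weights on one side by a constant leaves $F'/F$ unchanged. When tails dominate ($a_n\to a_\infty$, $b_m\to b_\infty$, $F\approx a_\infty/p$, $G\approx b_\infty/p$), both expected lengths are $\approx 1/p$. The limit is then $\frac12\delta_{0^\infty}+\frac12\delta_{1^\infty}$, whatever the level of the transition costs.

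A concrete check: $A=-a\,I_{\overline{01}}-b\,I_{\overline{10}}$ with $a,b>0$ is of the required form. Since $I_{\overline{01}}-I_{\overline{10}}=I_{\overline{1}}\circ\sigma-I_{\overline{1}}$, it is cohomologous to $-\frac{a+b}{2}(I_{\overline{01}}+I_{\overline{10}})$, which is symmetric under $0\leftrightarrow 1$. Hence $\mu_{\beta A}(\overline{0})=\frac12$ for every $\beta$, however large $b$ is.

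What selects a fixed point is the \emph{shape} of the weights, not their level. The head $B=\sum_n(a_n-a_\infty)e^{-np}$ (short excursions, cheaper than long stays because $A<0$ near the fixed point) must beat $\sqrt{a_\infty/b_\infty}$, the size of the tail $a_\infty/p$ when tails dominate. Comparing these exponential rates in $\beta$ is the real content of (a).

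\textbf{Parts (b) and (c).} For (b), a constant $\log c$ added to $A$ becomes $\beta\log c$ in $\beta A$, i.e.\ a factor $c^\beta$, not $c$. Also, if $F$ and $G$ differ only by a constant factor, then $F'/F=G'/G$ at the common root, so the ratio is $1$, not $c^{\pm 2}$.

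Worse, the tails $-\log(1+1/n)$ give $a_n\asymp e^{\beta d}n^{-\beta}$, which is Hofbauer type and not even of summable variation. With your negative transition values, $F(0)G(0)\to 0$. So for large $\beta$ the equation $F(p)G(p)=1$ has no root $p>0$, and $P(\beta A)=0$. Every convex combination of $\delta_{0^\infty}$ and $\delta_{1^\infty}$ is then an equilibrium state, so there is no $\mu_{\beta A}$ whose limit could be taken.

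The square actually comes from a critical balance. Suppose $F\approx a_\infty/p+B$ with $B\sqrt{b_\infty/a_\infty}\to t\in(0,\infty)$, and $G\approx b_\infty/p$. Then $F(p)G(p)=1$ is a quadratic in $1/p$, and the ratio above tends to $s^2$, where $s^2+ts=1$. This gives exactly the weights $\frac{1}{1+c^2}$ and $\frac{c^2}{1+c^2}$ with $c=1/s$. Realizing it requires tuning the values of $A$ so that the exponential rates of head and tail coincide and the prefactors converge.

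Finally, (c) is only a heuristic: no potential is specified. In this framework, non-convergence must come from such a critical balance with subexponential prefactors oscillating in $\beta$, and you neither construct nor estimate this. (Also, summable tails give summable variation, not H\"older regularity; H\"older needs geometric decay.)
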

 
\medskip
 
In another direction, in \cite{BLT} (see also \cite{Men} and \cite{LeMe}) for a more general setting) the authors prove the following result:
 
\begin{theorem} \label{T12} When, for a given $A$, we have
  {$\lim_{\beta \to \infty} \mu_{\beta A}=\mu^A$}, using properties of an associated subaction, one can establish the existence of a Large Deviation Principle (LDP) when {temperature goes to zero}: there exists a lower semicontinuous deviation function $I:\Omega \to [0,\infty]$ such that for any cylinder $C$,
\[
\lim_{\beta \to +\infty} \frac{1}{\beta} \log
\mu_{\beta \, A}(C)=-\inf_{ x \in C} I(x): = Q(C)\leq 0.
\]
 
When $\lim_{\beta \to \infty} \mu_{\beta A}(C)= \mu^A(C)=0$, $Q(C)<0$, we obtain the exponential rate of convergence as $\beta \to \infty$:
\medskip

$$ 0\swarrow\mu_{\beta A} (C)\sim e^{\,\,\beta Q(C)}$$
 
\medskip

One can show that 
$I(x)=0$ for $x$ in the support of $\mu^A$. Indeed, $I$ is given by 
\begin{equation} \label{ger}I(x)=\sum_{n\geq0}\big( u\circ\sigma-u-(A-\alpha(A))
\big)\circ\sigma^n(x)=\sum_{n\geq0} R(\sigma^n(x)) ,\end{equation}
where $u(x)$ is any calibrated subaction for $A$.

\end{theorem}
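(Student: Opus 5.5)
The plan is to work with the Ruelle transfer operator $\mathcal{L}_{\beta A}$ and its normalized eigenfunction, and to pass to the limit $\beta\to\infty$ through a logarithmic rescaling. Let $h_\beta>0$ be the Ruelle eigenfunction of $\mathcal{L}_{\beta A}$ with eigenvalue $e^{P(\beta A)}$, and put $u_\beta=\frac1\beta\log h_\beta$. I will use three standard facts. First, the family $u_\beta$ is equi-H\"older, because the H\"older constant of $\log h_\beta$ is $O(\beta)$. Second, $P(\beta A)/\beta\to\alpha(A)$. Third, after normalizing, any uniform limit $u$ of a subsequence $u_{\beta_n}$ is a calibrated subaction. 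The last point follows because the eigen-equation
\[
e^{\beta u_\beta(y)}=\sum_{\sigma(x)=y}e^{\beta(A(x)+u_\beta(x))-P(\beta A)}
\]
turns, after taking $\frac1\beta\log$, into the $\max$ equation \eqref{lyr1} in the limit; this is a Laplace/Varadhan argument. Here I would in fact need the limit $u_\beta\to u$ along the whole family. I will assume this holds, as a consequence of the selection hypothesis $\mu_{\beta A}\to\mu^A$ (this is where the argument of \cite{BLT} is used). Alternatively, one can fix a calibrated $u$ and show that every limit point of $u_\beta$ coincides with it.

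For the LDP itself, the next step is to express $\mu_{\beta A}$ of a cylinder through the normalized potential
\[
\bar A_\beta=\beta A+\beta u_\beta-\beta u_\beta\circ\sigma-P(\beta A),
\]
which satisfies $\mathcal{L}_{\bar A_\beta}1=1$ and has $\mu_{\beta A}$ as its fixed point. For a cylinder $C=\overline{a_1\dots a_r}$ and any $x\in C$, invariance gives
\[
\mu_{\beta A}(C)=\int \mathcal{L}^r_{\bar A_\beta}(I_C)\,d\mu_{\beta A}\approx e^{\sum_{j<r}\bar A_\beta(\sigma^j z)}\ \text{ for } z=a_1\dots a_r y,
\]
averaged over $y$ with respect to $\mu_{\beta A}$. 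The bounded distortion constants are $e^{O(\beta\, d(\cdot,\cdot))}$. Dividing by $\beta$ gives
\[
\frac1\beta\sum_{j<r}\bar A_\beta(\sigma^jz)\to-\sum_{j<r}R(\sigma^j z).
\]
The remaining tail is an integral over $y$ against $\mu_{\beta A}$, and it is handled with the LDP heuristic applied recursively. More precisely, $\mu_{\beta A}(C)=\int_C\dots$ can be rewritten as an infinite product along the orbit. Using that $\mu_{\beta A}$ concentrates near $\operatorname{supp}\mu^A$, where $R=0$, the Laplace principle yields
\[
\lim\frac1\beta\log\mu_{\beta A}(C)=-\inf_{x\in C}\sum_{n\ge0}R(\sigma^nx).
\]
For the lower bound I will pick a near-optimal $x\in C$ whose tail enters, up to a small error, the support of $\mu^A$. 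For the upper bound I will use $R\ge0$ together with a covering of $C$ by finitely many subcylinders, which is possible by compactness.

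Finally, the properties of $I$. It is nonnegative because $R\ge0$. It is lower semicontinuous because it is a sum of nonnegative continuous functions. On $\operatorname{supp}\mu^A$ it vanishes, since that support is invariant and contained in $\mathbb M_A(u)$ by Theorem \ref{T3}. The formula \eqref{ger} does not depend on the choice of calibrated subaction only if the limits agree, and this again rests on the selection assumption. The main obstacle is the uniform identification of $\lim\frac1\beta\log h_\beta$ together with the tail control, that is, showing that the mass is not lost to points of high cost. I would attack this first by proving the upper bound through the uniform convergence $\frac1\beta\bar A_\beta\to -R$, and then the lower bound through an explicit orbit shadowing the periodic or support points.
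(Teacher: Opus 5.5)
Your outline is the strategy of \cite{BLT}, which the survey quotes without proof: the eigenfunction $h_\beta$, the normalized potential $\bar A_\beta$ with $\frac1\beta\bar A_\beta\to-R$ uniformly, a finite covering by subcylinders for the upper bound, and a shadowing orbit for the lower bound. The genuine gap is the hypothesis you use to close it. You derive three things from the selection $\mu_{\beta A}\to\mu^A$: convergence of the whole family $u_\beta$, independence of \eqref{ger} from the choice of $u$, and the tail control in the lower bound. Selection does not supply any of them, since weak convergence of $\mu_{\beta A}$ says nothing at the exponential scale on which $u_\beta=\frac1\beta\log h_\beta$ lives, and the last two can actually fail under selection alone. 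The theorem of \cite{BLT} assumes the maximizing probability is unique, and that is exactly what is used at these three points.

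\begin{itemize}
\item \emph{Convergence and independence.} Uniqueness of $\mu^A$ makes the calibrated subaction unique up to constants. So with $\max u_\beta=0$ all subsequential limits coincide, $u_\beta\to u$ by Arzel\`a--Ascoli, and $R$ does not depend on $u$.
\item \emph{Lower bound.} Write $S_Nf=\sum_{j<N}f\circ\sigma^j$. If $x\in C$ and $I(x)<\infty$, every weak limit $\nu$ of $\frac1N\sum_{j<N}\delta_{\sigma^jx}$ is invariant with $\int R\,d\nu=0$. Hence $\nu$ is maximizing, so $\nu=\mu^A$. Therefore, for infinitely many $N$, the point $\sigma^Nx$ agrees in its first $K$ symbols with some $p\in\operatorname{supp}\mu^A$. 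This is what justifies your ``near-optimal $x$ whose tail enters the support''. Taking such an $N$ at least the length of $C$,
\[
\mu_{\beta A}(C)\ \ge\ \mu_{\beta A}\big(\overline{x_1\dots x_N p_1\dots p_K}\big)=\int_{\overline{p_1\dots p_K}}e^{S_N\bar A_\beta(x_1\dots x_N y)}\,d\mu_{\beta A}(y).
\]
Since cylinders are clopen, $\liminf_\beta\mu_{\beta A}(\overline{p_1\dots p_K})\ge\mu^A(\overline{p_1\dots p_K})>0$. H\"older continuity of $R$ (exponent $\theta$) gives $\frac1\beta S_N\bar A_\beta(x_1\dots x_Ny)\ge-S_NR(x)-c\,2^{-\theta K}-o(1)$ on that cylinder. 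As $S_NR(x)\le I(x)$, letting $K\to\infty$ yields $\liminf_\beta\frac1\beta\log\mu_{\beta A}(C)\ge-I(x)$.
\end{itemize}

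This splicing estimate replaces your ``LDP heuristic applied recursively''. As written, that heuristic is circular: it presupposes lower bounds for $\mu_{\beta A}$ on the cylinders where the tail lands.

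Selection alone cannot close the argument, because under it \eqref{ger} can be false. Every subaction has $R=0$ on the supports of all maximizing probabilities, so $I$ vanishes on each of them, while $\mu_{\beta A}$ may be exponentially small near a non-selected one. For example, on $\{0,1,2\}^{\mathbb N}$ let $A(x)=a(x_1,x_2)$ with
\[
a(0,0)=a(1,1)=0,\quad a(0,2)=a(2,0)=-\tfrac12,\quad a(1,2)=a(2,1)=-1,\quad a(0,1)=a(1,0)=a(2,2)=-10.
\]
The maximizing probabilities are the convex combinations of $\delta_{0^\infty}$ and $\delta_{1^\infty}$. The matrix $(e^{\beta a(i,j)})$ is symmetric, with Perron eigenvalue $\lambda_\beta=1+e^{-\beta}(1+o(1))$ and Perron vector $(1,e^{-\beta/2}(1+o(1)),e^{-\beta/2}(1+o(1)))$. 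Also $\mu_{\beta A}(\overline{i})$ is proportional to the square of the $i$-th entry. Hence $\mu_{\beta A}(\overline{0^k})=\mu_{\beta A}(\overline{0})\lambda_\beta^{1-k}\to1$, so $\mu_{\beta A}\to\delta_{0^\infty}$. But $\frac1\beta\log\mu_{\beta A}(\overline{1})\to-1$, whereas $\inf_{\overline 1}I=I(1^\infty)=0$ for every subaction.

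So you need to assume uniqueness of the maximizing probability, which implies the selection you use; with it, the steps above close your outline. Your upper bound, via the covering and $\inf_C S_NR\uparrow\inf_C I$ by compactness, is then fine as it stands.
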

 
Expression \eqref{ger} further highlights the importance of the role of subactions.

 \section{Maximization under constraints}

For a H\"{o}lder (or continuous) $A$, the problem of finding maximizing probabilities for $A$ with {constraints} was considered in \cite{GL7}.
It is a kind of generalization of the concept of rotation number to the setting of symbolic dynamics (see also \cite{KW1} and \cite{KGLM}).

Let $ \varphi: \Omega  \to \mathbb R^n $ be a continuous map
with coordinate continuous functions $\varphi_1, \ldots, \varphi_n : \Omega  \to \mathbb R$; that is, {$\varphi=(\varphi_1, \ldots, \varphi_n)$}. We can define an {induced map $\varphi_*: \mathcal{ M}_\sigma \to \mathbb R^n $}
given by
 
$\,\,\,\,\,\,\,\,\,\,\,\,\,{\displaystyle \varphi_*(\mu) = \left(\int \varphi_1 \;
d\mu, \ldots, \int \varphi_n \; d\mu \right)}\in \mathbb{R}^n $.
 
\medskip

$\varphi_* $ is a continuous and affine map.
\medskip
 
We call {$ \varphi_*(\mu) $ the rotation vector of the measure $
\mu \in \mathcal M_\sigma $}.
\medskip
 
The image $
\varphi_*(\mathcal M_\sigma) \subset \mathbb R^n $ is a convex compact set. We call {$
\varphi_*(\mathcal M_\sigma) $ the rotation set of $\varphi$}.
\medskip

For $ h \in
\varphi_*(\mathcal M_\sigma) $, the fiber $\varphi_{*} ^ {-1}(h) $ is
called the rotation class of $ h $.
\medskip

$$\{\mu \in \mathcal M_\sigma\,|\, \varphi_* (\mu)=h\}= \varphi_{*} ^ {-1}(h)
\subset \mathcal{ M}_\sigma $$ is a convex compact set.

For $ A \in C^0(\Omega) $ and $\varphi$, we define the so-called beta function $ \beta_{A,
\varphi}: \varphi_*(\mathcal M_\sigma) \to \mathbb R $, such that for $h\in \mathbb{R}^n\in \varphi_*(\mathcal M_\sigma) $:
$$ \beta_{A, \varphi}(h) = \sup_{\mu \in \mathcal{M}_\sigma} \left \{\int A \,d\, \mu\, |\, \varphi_* (\mu)=h  \right \}. $$
\medskip

We call the function $ \varphi $ the constraint and
the function $ A $ the potential to be maximized.

\medskip
 
Given $A$, $\varphi$ and $h\in \mathbb{R}^n$,
we are interested in
{probability measures $\mu$ belonging to the rotation class of $ h $
that maximize the integral $ \int A d \mu$}.
 
\medskip

Consider the set
$$ \text{\Large $\mathit m$}_{A, \varphi}(h) = \left \{\mu \,|\,  \varphi_{*} (\mu)=h \,\,\text{and}\,\, \int A \; d\mu = \beta_{A, \varphi}(h) \right \}\neq \emptyset. $$
If {$ \mu \in \text{\Large $\mathit m$}_{A, \varphi}(h) $}, we say
that {$\mu $ is an $(A,\varphi, h)$-maximizing probability}.

The following results were proved in \cite{GL7}.
 
\smallskip

\begin{theorem} \label{T15} Given $\varphi$, 
consider a fixed $ h \in \varphi_* (\mathcal M_\sigma) $. There exists a {residual
subset} $\mathcal G = \mathcal G (h) \subset C^0(\Omega) $ such that,
for each potential $ A \in \mathcal G $, the set $ \text{\Large $\mathit
m$}_{A, \varphi}(h) $ contains a unique probability measure.
\end{theorem}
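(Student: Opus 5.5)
The plan is to prove Theorem \ref{T15} with the standard convex-analytic genericity argument (Mañé type) that is also behind the density of uniqueness quoted above from \cite{CLT}. Fix $h$ and write $K=\varphi_*^{-1}(h)\subset \mathcal{M}_\sigma$. This set is nonempty, convex and weak-$*$ compact, because $\varphi_*$ is continuous and affine. For $A\in C^0(\Omega)$ the map $\mu\mapsto \int A\,d\mu$ is continuous and affine on $K$. Hence the set of $(A,\varphi,h)$-maximizing probabilities is $\text{\Large $\mathit m$}_{A,\varphi}(h)=\arg\max_K \int A\,d\mu$, which is a nonempty compact face of $K$.

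Fix a countable dense sequence $(f_k)_{k\ge1}$ in $C^0(\Omega)$; it exists since $\Omega$ is a compact metric space. Two probabilities $\mu,\nu$ coincide iff $\int f_k\,d\mu=\int f_k\,d\nu$ for all $k$. For $k,j\in\mathbb N$ let
\[
U_{k,j}=\Big\{A\in C^0(\Omega)\,:\, \sup_{\mu,\nu\in \text{\Large $\mathit m$}_{A,\varphi}(h)} \Big|\int f_k\,d\mu-\int f_k\,d\nu\Big|<\tfrac1j\Big\}.
\]
Then $\mathcal G=\bigcap_{k,j}U_{k,j}$ is exactly the set of $A$ for which $\text{\Large $\mathit m$}_{A,\varphi}(h)$ is a singleton. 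It remains to show that each $U_{k,j}$ is open and dense in the sup norm; Baire's theorem then makes $\mathcal G$ residual.

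\textbf{Openness.} I will use upper semicontinuity of $A\mapsto \text{\Large $\mathit m$}_{A,\varphi}(h)$. Suppose $A_n\to A$ uniformly, $\mu_n\in \text{\Large $\mathit m$}_{A_n,\varphi}(h)$, and $\mu_n\to\mu$ weak-$*$. Then $\mu\in K$, since $K$ is closed. Also $\int A_n\,d\mu_n\to\int A\,d\mu$, and $\beta_{A_n,\varphi}(h)\to \beta_{A,\varphi}(h)$ because $|\beta_{A_n,\varphi}(h)-\beta_{A,\varphi}(h)|\le\|A_n-A\|_\infty$. So $\mu\in \text{\Large $\mathit m$}_{A,\varphi}(h)$. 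By compactness, if $A\in U_{k,j}$, nearby potentials have maximizing sets lying in a small weak-$*$ neighborhood of $\text{\Large $\mathit m$}_{A,\varphi}(h)$. On that neighborhood the oscillation of $\int f_k$ stays below $1/j$, since the supremum in the definition of $U_{k,j}$ is attained on the compact maximizing set and is strictly less than $1/j$.

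\textbf{Density, the main step.} Given $A$ and $\epsilon>0$, set $A_\epsilon=A+\epsilon f_k$. Put $M=\text{\Large $\mathit m$}_{A,\varphi}(h)$. First, the maximizers of $A+\epsilon f_k$ over $K$ converge, as $\epsilon\to0$, into $M':=\arg\max_{M}\int f_k\,d\mu$; this is the standard two-step optimization limit. Second, $\int f_k$ is constant on $M'$, but the oscillation over $\text{\Large $\mathit m$}_{A_\epsilon,\varphi}(h)$ need not tend to zero from this alone. The cleaner route is to use the concave function $t\mapsto \beta_{A+tf_k,\varphi}(h)$. This function is convex, not concave, as a supremum of affine functions of $t$. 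At every $t$ its one-sided derivatives are $\max$ and $\min$ of $\int f_k\,d\mu$ over $\text{\Large $\mathit m$}_{A+tf_k,\varphi}(h)$, by a Danskin-type argument using compactness. A convex function of one variable is differentiable except at countably many $t$. So for arbitrarily small $t$ the oscillation of $\int f_k$ on $\text{\Large $\mathit m$}_{A+tf_k,\varphi}(h)$ is zero, and $A+tf_k\in U_{k,j}$. This gives density.

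The point I expect to need the most care is the Danskin identification of the one-sided derivatives with the extreme values of $\int f_k$ on the maximizing set. It again relies on the upper semicontinuity established above, applied along $t\downarrow t_0$ and $t\uparrow t_0$.
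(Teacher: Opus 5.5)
Your argument is correct and complete. The survey gives no proof of this statement; it only cites \cite{GL7}. What you wrote is the standard Ma\~n\'e-type genericity scheme, with three parts:
\begin{itemize}
\item each $U_{k,j}$ is open, by upper semicontinuity of $A\mapsto \text{\Large $\mathit m$}_{A,\varphi}(h)$;
\item each $U_{k,j}$ is dense, by one-variable convexity of $b(t)=\beta_{A+tf_k,\varphi}(h)$;
\item Baire's theorem in the Banach space $C^0(\Omega)$ finishes the proof.
\end{itemize}

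The step you flag as delicate is not actually needed. You never use the full Danskin formula, only the trivial inclusion. Write $M_{t_0}=\text{\Large $\mathit m$}_{A+t_0f_k,\varphi}(h)$ and take $\mu\in M_{t_0}$. Since $\mu\in K$, for every $t$ you have $b(t)\ge \int A\,d\mu+t\int f_k\,d\mu=b(t_0)+(t-t_0)\int f_k\,d\mu$. Hence $\int f_k\,d\mu\in[b'_-(t_0),b'_+(t_0)]$.

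At each of the all-but-countably-many $t_0$ where $b$ is differentiable, this interval is a single point. So the oscillation of $\int f_k$ on $M_{t_0}$ vanishes, with no semicontinuity or compactness argument. Upper semicontinuity is needed only for openness, where your argument is fine. Two edits for the write-up: delete the abandoned ``two-step limit'' detour, and fix the slip calling $b$ concave.

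For comparison, the same theorem can be packaged via Mazur's theorem. The map $\mathcal{B}(A)=\beta_{A,\varphi}(h)$ is convex and $1$-Lipschitz on the separable Banach space $C^0(\Omega)$. Its subdifferential at $A$ is exactly $\text{\Large $\mathit m$}_{A,\varphi}(h)$. To see this, use:
\begin{itemize}
\item $\mathcal{B}(A+g)\le \mathcal{B}(A)+\max g$, which forces any subgradient to be a probability;
\item $\mathcal{B}$ is unchanged by adding coboundaries and multiples of $\varphi_i-h_i$, which forces invariance and the constraint;
\item positive homogeneity, which forces maximality.
\end{itemize}
Mazur's theorem then gives G\^ateaux differentiability, i.e.\ a singleton subdifferential, on a dense $G_\delta$. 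That route is shorter once the subdifferential is identified. Yours is in effect Mazur's theorem done by hand along the countably many directions $f_k$, and it needs only the easy inclusion.
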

 
\smallskip

\begin{theorem} \label{T16}
Let $ \varphi \in C^0(\Omega, \mathbb R^n) $ be a locally constant
function. Every point of the {interior of the rotation set $
\varphi_*(\mathcal M_\sigma) $} is the rotation vector of an ergodic
probability measure.
\end{theorem}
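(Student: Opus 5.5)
Since $\varphi$ is locally constant, I will reduce to a finite-dimensional convex-combination argument with periodic orbits. First replace $\sigma$ by a higher block presentation so that $\varphi$ depends only on the first symbol, or on the first $r$ symbols. The rotation set is then generated by the periodic orbits of $\sigma$. By the weak density of periodic measures (\cite{Sig}) and the continuity and affineness of $\varphi_*$, the convex set $\varphi_*(\mathcal M_\sigma)$ is the closure of the convex hull of the rotation vectors of periodic orbits. Since those vectors are rational points, and one can show only finitely many vertices are needed (the simple cycles of the de Bruijn graph of words of length $r$), the rotation set is a polytope whose vertices are rotation vectors of periodic orbits. Hence any interior point $h$ can be written as $h=\sum_{i=0}^{n} t_i h_i$ with $t_i>0$, $\sum t_i=1$, where each $h_i=\varphi_*(\mu_{\gamma_i})$ for a periodic orbit $\gamma_i$ and the $h_i$ are affinely independent. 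One can even choose the $h_i$ inside a small simplex containing $h$ in its interior.

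The key step is to concatenate the periodic orbits into a single ergodic measure. Since the full shift is mixing, for any two words there is a connecting word of length at most $r$. For large integers $N_i$, build the sequences obtained by freely concatenating the blocks $B_i=(\text{word of }\gamma_i)^{N_i}$, each followed by short connectors. Take the Bernoulli-type measure choosing block $i$ with probability $p_i$. Made stationary, this is a renewal (countable-state Markov) construction, and its invariant measure $\nu$ is ergodic. Alternatively, take the single periodic orbit of the concatenation $B_0c_0B_1c_1\cdots B_nc_n$, whose measure is ergodic. Its rotation vector is
\[
\varphi_*(\nu)=\sum_i s_i h_i+O\Big(\frac{1}{\min N_i}\Big),
\]
where the weights $s_i$ are proportional to $N_i|\gamma_i|$.

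The main obstacle is hitting $h$ exactly rather than approximately, because the connector error is not zero. I will handle this by continuity plus a degree or open-mapping argument. The family of ergodic measures $\nu_p$ from the Bernoulli-block construction depends continuously on the weights $p$ ranging over the open simplex. The map $p\mapsto \varphi_*(\nu_p)$ is a continuous perturbation, of size $O(1/\min N_i)$, of the affine homeomorphism $p\mapsto\sum s_i(p)h_i$ onto the simplex spanned by the $h_i$. By Brouwer's theorem (a small perturbation of a homeomorphism still covers interior points at distance larger than the perturbation from the boundary), $h$ lies in the image once the $N_i$ are large enough. The resulting $\nu_p$ is an ergodic probability with $\varphi_*(\nu_p)=h$.

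The delicate points to check are the ergodicity of $\nu_p$, which should follow from $\nu_p$ being the push-forward of a Bernoulli measure under a suspension with bounded roof, and the continuity of $p\mapsto\varphi_*(\nu_p)$, which is an explicit rational function of $p$.
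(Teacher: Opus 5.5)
Your argument is correct. The paper itself gives no proof of this statement; it only attributes it to \cite{GL7}. So I compare your route with a standard short argument.

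Your two substantive steps hold up. First, $\nu_p$ is ergodic as the image of the tower over the Bernoulli$(p)$ shift with roof $L_i=N_i|\gamma_i|$ (equivalently, a factor of a finite-state Markov chain on the states $(i,k)$, $0\le k<L_i$). Second, exact hitting follows from Brouwer. Set $F(p)=\varphi_*(\nu_p)$, $F_0(p)=\sum_i s_i(p)h_i$, $\Delta=\mathrm{conv}(h_0,\dots,h_n)$ and $G=F\circ F_0^{-1}$. Then $|G(y)-y|\le\delta=O(1/\min_i N_i)$ uniformly on $\Delta$. Once $\delta<\mathrm{dist}(h,\partial\Delta)$, the map $y\mapsto h+y-G(y)$ sends $\Delta$ into itself, and its fixed point $y^*$ satisfies $G(y^*)=h$.

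For locally constant $\varphi$ one can avoid approximation and topology altogether:
\begin{itemize}
\item If $\varphi$ depends on $r$ coordinates, then $\varphi_*(\mu)$ depends only on the $r$-block marginals of $\mu$.
\item Let $\mu_0$ be the uniform Bernoulli measure. Since $h$ is interior, $h'=(h-\epsilon\varphi_*(\mu_0))/(1-\epsilon)$ lies in the rotation set for small $\epsilon>0$, say $h'=\varphi_*(\mu')$.
\item Then $h=\varphi_*(\mu)$ with $\mu=(1-\epsilon)\mu'+\epsilon\mu_0$, and $\mu$ charges every cylinder.
\item The $(r-1)$-step Markov measure with the same $r$-block marginals has a primitive transition matrix, hence is mixing, and still has rotation vector $h$.
\end{itemize}
That route is shorter and exact, and it gives a fully supported ergodic measure of positive entropy. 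However, it uses local constancy in an essential way. Your route only needs the junction error over a block of length $L$ to be $o(L)$. That error is at most $\sum_{m=1}^{L}\mathrm{var}_m\varphi$, where $\mathrm{var}_m\varphi$ is the oscillation of $\varphi$ on cylinders of length $m$. So your argument extends, with only cosmetic changes, to H\"older and even merely continuous $\varphi$ on the full shift.

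A few minor points, none of which affect validity:
\begin{itemize}
\item Rotation vectors of periodic orbits are rational only when $\varphi$ is rational-valued.
\item The polytope step is unnecessary. Its conclusion as phrased (vertices $h_i$ with all $t_i>0$) can fail: the centre of a square lies on an edge of every triangle formed by its vertices. What actually does the work is your second choice, periodic rotation vectors near the vertices of a small simplex around $h$, which exist by density of periodic measures.
\item No connectors are needed on the full shift.
\item The map $p\mapsto\sum_i s_i(p)h_i$ is a projective, not affine, homeomorphism onto $\Delta$. That is all the fixed-point step uses.
\end{itemize}
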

 
\smallskip

An example of a locally constant function is $\varphi= (I_{\overline{01}},\,\, 3\, I_{\overline{1,0}}, \,\,-4.3\, I_{\overline{11}}).$
 
\smallskip

\begin{theorem} \label{T17}
Suppose $\varphi \in C^0(\Omega, \mathbb Q) $ is a locally
constant function, and $ A $ is a H\"older potential. Consider a rational vector 
 $ r \in \text{int}(\varphi_*(\mathcal M_\sigma))$. Then,
$$ \beta_{A, \varphi}(r) = \sup \left \{\int A \; d\mu: \mu \in \varphi_*^{-1} (r),\, \mu \,\,\text{a  periodic probability measure} \right \}. $$
\end{theorem}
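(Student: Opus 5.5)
Since periodic measures with rotation vector $r$ lie in $\varphi_*^{-1}(r)$, the supremum on the right is at most $\beta_{A,\varphi}(r)$. For the reverse inequality I will fix an $(A,\varphi,r)$-maximizing probability $\mu$ (it exists by compactness) and $\epsilon>0$. The goal is a periodic measure $\rho$ with $\varphi_*(\rho)=r$ exactly and $\int A\,d\rho>\int A\,d\mu-\epsilon$.

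First I reduce to a Markov setting. Say $\varphi$ depends on the first $k$ coordinates. Recoding by $k$-blocks turns $\sigma$ into a subshift of finite type on words of length $k$, and $\varphi$ becomes a function on vertices of a finite strongly connected graph $G$ (a de Bruijn graph). For a closed path $\gamma$ in $G$ write $\Phi(\gamma)\in\mathbb{Q}^n$ for the sum of $\varphi$ along it. The periodic measure of $\gamma$ then has rotation vector $\Phi(\gamma)/|\gamma|$.

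Next I approximate $\mu$ by a closed path using Theorem \ref{T16} and the interiority of $r$. Pick finitely many periodic orbits $\gamma_1,\dots,\gamma_{n+1}$ whose rotation vectors are rational and form a simplex containing $r$ in its interior; this is possible since $r\in\mathrm{int}$ and periodic measures are dense. Perturb $\mu$ to $\mu'=(1-t)\mu+t\nu$, with $\nu$ a convex combination of the $\gamma_i$ measures and $t$ small, to gain slack: any rotation vector close to $r$ can then be corrected back to $r$. Approximate $\mu$ (ergodic decomposition plus Birkhoff, or directly the density of periodic measures) by a long closed path $w$ in $G$ with $\Phi(w)/|w|$ within $\delta$ of $r$ and $\frac1{|w|}\sum_w A$ within $\epsilon/3$ of $\int A\,d\mu$. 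Since $A$ is Hölder and the orbit of $w$ is periodic, the integral of $A$ along the genuine periodic orbit agrees with the $\mu$-average up to a small error.

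The last step is exact correction, which I expect to be the main obstacle. I form the concatenated closed path $\gamma=w^{N}\gamma_1^{m_1}\cdots\gamma_{n+1}^{m_{n+1}}$, joined by fixed connecting paths of bounded length (available by strong connectivity). I then solve $\Phi(\gamma)=|\gamma|\,r$ for nonnegative integers $N,m_i$. This is a linear system with rational coefficients. Because $r$ lies in the interior of the simplex spanned by the $\gamma_i$ rotation vectors and the error vector $\Phi(w)-|w|r$ is small relative to $|w|$, there is a nonnegative rational solution with $m_i/N$ of order $\delta$. Clearing denominators makes it integral, and the bounded connector contributions are absorbed by the same simplex argument. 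The resulting $\rho$ has $\varphi_*(\rho)=r$ exactly. Its $A$-average differs from $w$'s by $O(\delta)\|A\|_\infty$ plus $O(1/N)$ from the connectors and the Hölder gluing errors, giving $\int A\,d\rho>\beta_{A,\varphi}(r)-\epsilon$. The delicate points are choosing $\delta$ before the $\gamma_i$ weights and controlling the Hölder error at the junctions, which requires $A$ Hölder rather than merely continuous.
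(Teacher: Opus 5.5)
The paper gives no proof of this statement (it only cites the source), so I assess your argument on its own. The architecture is sound. After recoding, $\Phi$ and length are exactly additive along concatenated closed paths. A single periodic orbit $w$ approximates $\mu$ both in $\int A$ and in $\varphi_*$, a fixed simplex of periodic orbits $\gamma_i$ around $r$ supplies the correction, and the junction errors are bounded.

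The gap is exactly at the step you flagged, and your way of resolving it does not work. Write $e_w=\Phi(w)-|w|\,r$ and $f_i=\Phi(\gamma_i)-|\gamma_i|\,r$. Let $e_c$ be the same quantity for the connecting paths, which together form one closed loop traversed once. Then $\varphi_*(\rho)=r$ is equivalent to
\[
N\,e_w+\sum_{i=1}^{n+1} m_i f_i=-e_c ,
\]
which is an \emph{inhomogeneous} system. Interiority of $r$ gives nonnegative rational solutions, but clearing denominators destroys them: multiplying $(N,m_i)$ by $D$ changes the right-hand side to $-D\,e_c$.

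Worse, an integral solution need not exist for the pieces you chose. The integer combinations of the $f_i$ form a lattice $\Lambda\subset\mathbb{Q}^n$, and you need $N e_w+e_c\in\Lambda$ for some $N$. That is a congruence condition, and it can fail. For instance, take $\varphi=I_{\overline{1}}$, $r=1/2$, $\gamma_1=(0001)^\infty$, $\gamma_2=(0111)^\infty$; then $f_1=-1$, $f_2=1$ and $\Lambda=\mathbb{Z}$. If $|w|$ is even (so $e_w\in\mathbb{Z}$) and the connecting paths form a loop of odd length (so $e_c\in\frac12+\mathbb{Z}$), no $N,m_1,m_2$ work. ``Absorbing'' $e_c$ by the simplex argument only solves the equation over the nonnegative rationals, not the integers.

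The missing idea is to make the system homogeneous. One way is to give the connector loop its own multiplicity: run through your pattern $D$ times, letting the exponents differ from round to round (possibly $0$). Another is to base every piece at one vertex $v_0$ of $w$, replacing $\gamma_i$ by loops $a_i\gamma_i^{K}b_i$ at $v_0$. Here the connectors have bounded length and $K$ is fixed in advance so that the new rotation vectors still surround $r$. In the first form the exactness condition reads $N e_w+\sum_i m_i f_i+D\,e_c=0$, which is homogeneous with rational coefficients.

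The $f_i$ positively span $\mathbb{R}^n$ and satisfy a rational positive relation. Hence this system has a solution with $N=D=1$, $m_i\ge 0$ rational, and $\sum_i m_i|\gamma_i|\le C(\delta|w|+|e_c|)$, where $C$ depends only on the simplex. Multiplying by a common denominator gives integers without changing any ratio.

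With this change your estimates go through: connectors and junctions cost $O(1/|w|)$ per unit length, and the $\gamma_i$-blocks occupy a fraction $O(\delta)$ of the length. As a side remark, on this route continuity of $A$ already suffices, since a block of length $L$ costs only $o(L)$ at its junction; H\"older regularity merely makes that cost $O(1)$.
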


 \section{Changing the dynamics} 
 
 One can consider a different dynamics and  similar results are true.

Consider the transformation $T(x)= 2 x$ (mod 1) acting on $[0,1]$, and the set $\mathcal{M}_T$ of $T$-invariant probabilities. Given  a H\"{o}lder potential $A:[0,1] \to \mathbb{R}$, we say that $\mu^A \in \mathcal{M}_T$ is maximizing for $A$, if
$$\alpha(A) := \sup \{\int A\,  d \rho \, |\, \rho \in \mathcal{M}_T\}= \int A d \mu^A.$$

In a similar way as before we can say that a continuous function $u:[0,1]\to \mathbb{R}$ is a subaction for $A$ if

\begin{equation}\label{ditc12}  u(T(x)) -  u(x) - A(x) + \alpha(A) \geq 0.\end{equation}
By definition we  set $R$ as 
 \begin{equation}\label{x1} R(x)\,:=\, u(T(x)) -  u(x) - A(x) + \alpha(A) \geq 0,\end{equation}

One can show that for any point $x$ in the support of a maximizing measure for $A$, we have $R(x)= 0$. In the same way as before, we can take advantage of the  $1/2$ iterative procedure (using your desktop)  to get explicit results; which can be checked with pen and paper. In this case, the method is more direct than the case of the symbolic space; you do not need to use the binary expansion \eqref{soz} (see \cite{FLO1}).

\smallskip

We show in Figure  \ref{fig:jher}  the  graph of a potential  $A:[0,1] \to \mathbb{R}$ (which is linear by part), the graph of  the calibrated subaction $u$ (we get from the iterative procedure), and the graph of $R$.  It follows from a simple inspection analysis of the function $R$ that  the maximizing probability is  $\mu^A= \frac{1}{2} \delta_{1/3} +  \frac{1}{2} \delta_{2/3}$.

In Figure \ref{fig:jt} we exhibit one more application of the use of the $1/2$ iterative procedure taken from \cite{FeLoEl1} (see also \cite{ConGu}).

 In order to test its efficiency, in \cite{FeLoEl1} the method was applied to the potential $A(x)=- |x- K|$, where $K$ is the Cantor set in  $[0,1]$. In this case, a large number of iterations are needed to get a reasonable approximation of $u$ (producing   $x \to R(x)$, such that  $R(x)$ is very close to zero  for $x\in K$).

Some related references on Ergodic Optimization for one-dimensional maps are \cite{GSZ}, \cite{HXY} and \cite{RS}.

\begin{figure}[h]
  \centering
  \includegraphics[scale=0.27]{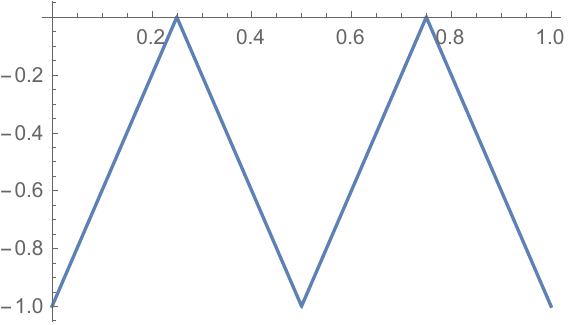}\qquad
  \includegraphics[scale=0.27]{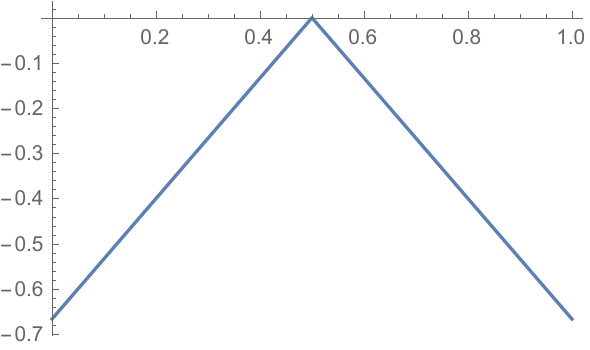}\qquad
  \includegraphics[scale=0.27]{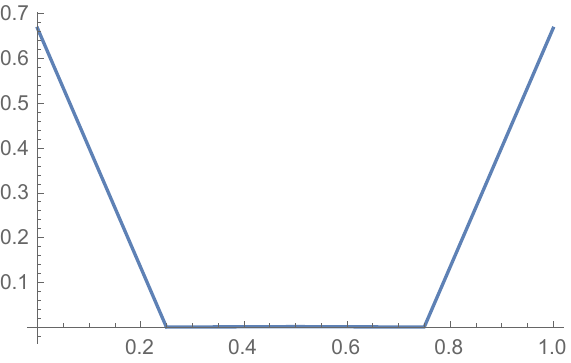}
  \caption{ From left to right: the  graph of the potential  $A$, the graph of  the calibrated subaction $u$ and the graph of $R$. The dynamics is given by the map $T(x)=2 x$ (mod 1) and  the maximizing probability is  $\mu^A= \frac{1}{2} \delta_{1/3} +  \frac{1}{2} \delta_{2/3}$.}\label{fig:jher}
\end{figure}

\begin{figure}[h]
  \centering
  \includegraphics[scale=0.10]{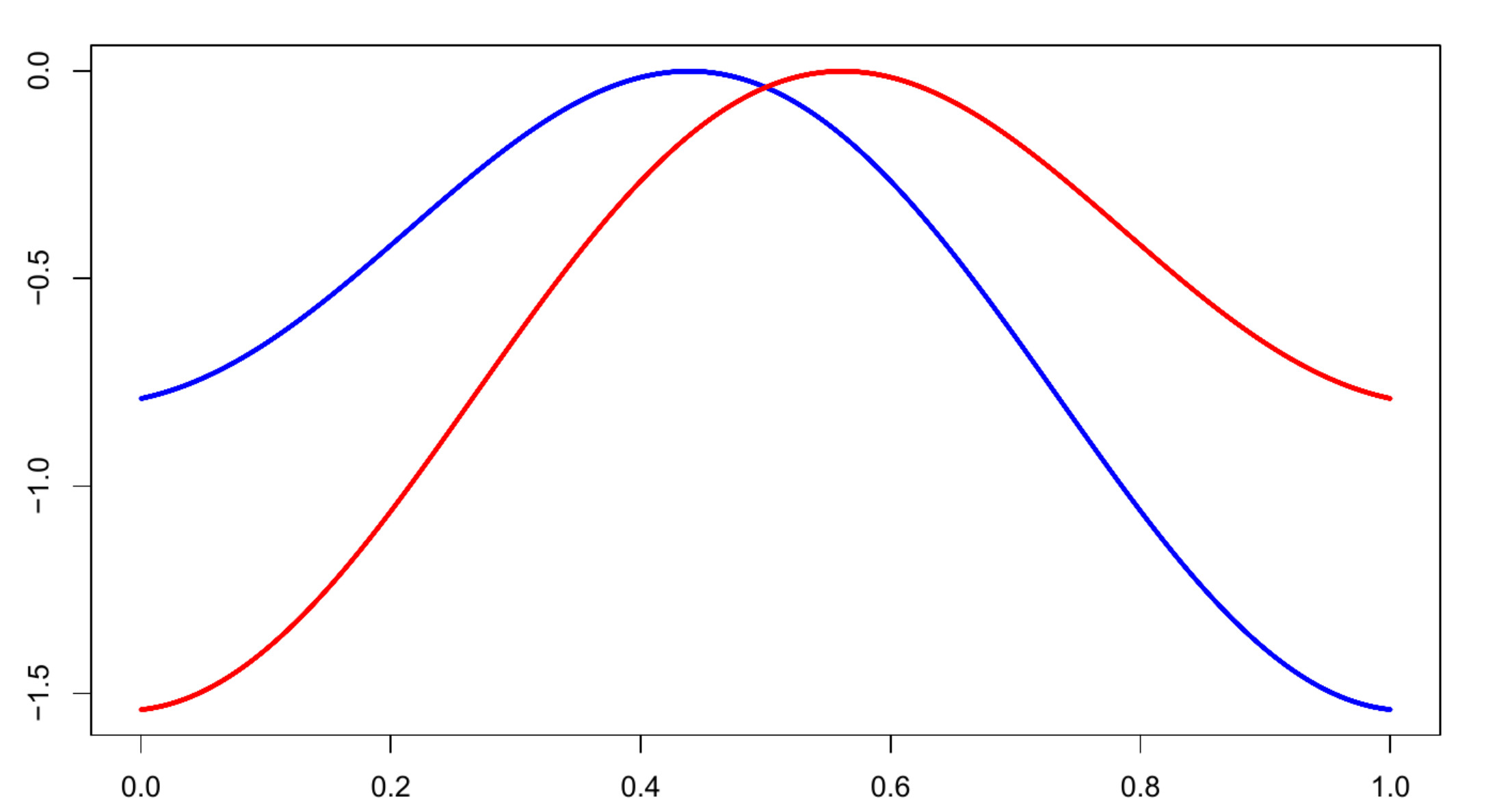}\qquad
  \includegraphics[scale=0.37]{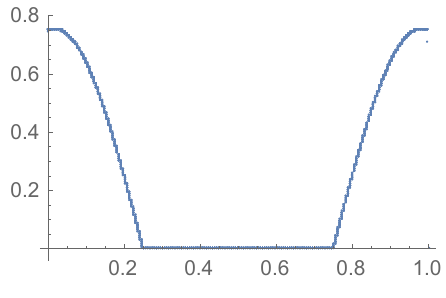}
  \caption{Case $A(x)=\sin^2\,(2 \pi x)$ - From the $1/2$ iterative procedure: on the left side, we show the approximated subaction $u$ which is given by the supremum of the two functions in red and in blue. The graph of $R$ using the approximation of the subaction $u$ is shown in the right-hand picture. The orbit of period 2 given by $\frac{1}{2} \delta_{1/3} + \frac{1}{2} \delta_{2/3}$ is inside the set $R=0$, and therefore is a maximizing probability $\mu^A$ for $A$. The expression of the two functions on the left side is given explicitly by a convergent series (see Section 5 in \cite{FeLoEl1}),  and  the use of the method shows a very good approximation of the analytical expression.}\label{fig:jt}
\end{figure}

\end{document}